\newtheorem{thm}{Theorem}[section]
\newtheorem{cor}[thm]{Corollary}
\newtheorem{lem}[thm]{Lemma}
\newtheorem{prop}[thm]{Proposition}
\theoremstyle{remark}
\newtheorem{exm}[thm]{Example}
\author{Vytas Zacharovas\thanks{Institute of Statistical Sciences, Academia Sinica, Taipei, Taiwan}\thanks{Institute of Computer Science, Vilnius University, Vilnius, Lithuania, E-mail:vytas.zacharovas@mif.vu.lt}\thanks{This work was supported in part by the National Science and Technology Council (NSTC) of Taiwan under Grant No. NSTC 112-2811-M-001-002.}}
\title{Convergence in \(\chi^2\) Distance to the Normal Distribution for Sums of Independent Random Variables}
\begin{document}
\maketitle
\begin{abstract}
Suppose \( n \) independent random variables \( X_1, X_2, \dots, X_n \) have zero mean and equal variance. We prove that if the average of \(\chi^2\) distances between these variables and the normal distribution is bounded by a sufficiently small constant, then the \(\chi^2\) distance between their normalized sum and the normal distribution is \( O(1/n) \).
\end{abstract}

\noindent \emph{MSC 2020 Subject Classifications}: Primary 62E17;
secondary  60F05.

\noindent \emph{Key words}: Normal approximation,  
 $\chi^2$-distance, Hermite polynomials, Subgaussian distribution, Parseval's identity, Stein's approach.

\section{Introduction}
Suppose 
\[
S_n=\frac{X_1+X_2+\cdots+X_n}{\sqrt{n}}
\]
	where \(X_j\) are independent, \(\mathbb{E}X_j=0\) and \(\mathbb{E}X_j^2=1\). One of the earliest results in probability theory is the central limit theorem stating that the distribution of this sum converges to standard normal distribution  with the density
\[
\varphi(x)=\frac{e^{-x^2/2}}{\sqrt{2\pi}},
\]
if fairly general conditions are satisfied. The closeness  of the distribution of \(S_n\) and the normal distribution can be quantified in a number of different metrics such as total variation distance, Kolmogorov distance, etc. This work is devoted to the study of the so called \(\chi^2\) distance between these two distributions. 
For a random variable \(Y\)  with a density \(p(x)\) the \(\chi^2\) distance between the distribution of \(Y\) and that of the standard normal distribution is defined as 
\[
\chi^2(Y,\mathcal{N})=\int ^{+\infty }_{-\infty }\left( \dfrac{p\left( x\right) }{\varphi \left( x\right) }-1\right) ^{2}\varphi \left( x\right)\, dx,
\]
where \(\varphi(x)\) is the density of the standard normal distribution. This distance is important as it serves as an upper bound for several other distances. For example, a simple application of Cauchy inequality shows that it is an upper bound for total variation and Kolmogorov distances
\[
\sup_{y\in \mathbb{R}}|\mathbb{P}(Y\leqslant y)-\Phi(y)|\leqslant\frac{1}{2}\int ^{+\infty }_{-\infty }|p(x)-\varphi \left( x\right)|\, dx\leqslant \frac{1}{2}\sqrt{\chi^2(Y,\mathcal{N})},
\]
where \(\Phi(x)=\int_{-\infty}^x\phi(t)\,dt\) is the distribution function of the standard normal distribution. A trivial application of the elementary inequality \(\log x\leqslant x-1\) shows that the \(\chi^2\)-distance is an upper bound for the information divergence distance
\[
\int ^{+\infty }_{-\infty }p(x)\log\frac{p(x)}{\varphi \left( x\right)}\, dx\leqslant \chi^2(Y,\mathcal{N}).
\] 
Once established, the upper bound for \(\chi^2(Y,\mathcal{N})\) leads to a non-uniform estimate of difference of distribution functions with very fast decreasing tails
\[
|\mathbb{P}(Y\leqslant y)-\Phi(y)|\leqslant \sqrt{\min\{\Phi(y),1-\Phi(y)\}}\sqrt{\chi^2(Y,\mathcal{N})}.
\]
For more information on the relation of \(\chi^2\)-distance with other metrics we refer the reader to \cite{bobkov_et_al_2019}.

 The earliest result evaluating \(\chi^2(S_n,\mathcal{N})\) is the paper by \cite{fomin_1982} who showed that if all the variables \(X_j\) have the same distribution and a certain condition on the moments of \(X_j\) is satisfied, then 
\[
\chi^2(S_n,\mathcal{N})=O\left(\frac{1}{n}\right).
\]
The condition of Fomin's paper was fairly restrictive. Recently \cite{bobkov_et_al_2019} proved necessary and sufficient conditions under which a sum of \(n\) identically distributed random variables converges to normal distribution in \(\chi^2\) distance.
\begin{thm}[\citet*{bobkov_et_al_2019}]
\label{thm_bobkov_et_al} Suppose all the random variables \(X_j\) are identically distributed. Then we have \(\chi^2(S_n,\mathcal{N})\to 0\) as \(n\to\infty\), if and only if \(\chi^2(S_n,\mathcal{N})\) is finite for some \(n=n_0\), and
\begin{equation}
\label{sub-gausian_condition}
\mathbb{E}e^{tX_1}<e^{t^2}\quad\text{for all real }t\not=0.	
\end{equation}
In this case, \(\chi^2\)-distance admits and Edgeworth-type expansion
\begin{equation}
	\chi^2(S_n,\mathcal{N})=\sum_{j=1}^{s-2}\frac{c_j}{n^j}+O\left(\frac{1}{n^{s-1}}\right)
\end{equation}
which is valid for every \(s=3,4,\ldots\) with coefficients \(c_j\) representing certain polynomials in the moments \(\mathbb{E}X^k\), \(k=3,\ldots,j+2\).
\end{thm}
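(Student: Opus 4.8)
The plan is to reduce the whole statement to the analysis of a single generating function via the Hermite--Parseval representation of the $\chi^2$ distance. Writing $H_k$ for the probabilists' Hermite polynomials, which form an orthogonal basis of $L^2(\varphi)$ with $\int H_jH_k\varphi=k!\,\delta_{jk}$ and satisfy $\sum_k \tfrac{t^k}{k!}H_k(x)=e^{tx-t^2/2}$, I would expand the density ratio $p_n/\varphi$ of $S_n$ in this basis. Parseval then gives $\chi^2(S_n,\mathcal{N})=\sum_{k\ge1}(\mathbb{E}H_k(S_n))^2/k!$, while independence and $M(s):=\mathbb{E}e^{sX_1}$ assemble the Hermite moments into the entire function
\[
G_n(t):=\sum_{k\ge0}\frac{\mathbb{E}H_k(S_n)}{k!}\,t^k=e^{-t^2/2}M(t/\sqrt n)^n .
\]
Since the Taylor coefficients of $G_n$ are $c_k=\mathbb{E}H_k(S_n)/k!$ and $\sum_k c_k^2\,k!=\chi^2(S_n,\mathcal{N})+1$, the Bargmann--Fock identity $\sum_k c_k^2 k!=\tfrac1\pi\int_{\mathbb{C}}|G_n(z)|^2e^{-|z|^2}\,dA(z)$ (with $dA$ planar Lebesgue measure) and the substitution $z=\sqrt n\,w$ produce the exact formula
\[
\chi^2(S_n,\mathcal{N})+1=\frac{n}{\pi}\int_{\mathbb{C}}\Psi(w)^n\,dA(w),\qquad \Psi(w):=|M(w)|^2\,e^{-2(\mathrm{Re}\,w)^2},
\]
where the imaginary part cancels exactly in the exponential prefactor. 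Everything now depends on the behaviour of the single function $\Psi$, with $\Psi(s)=\big(M(s)e^{-s^2}\big)^2$ on the real axis.

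For \emph{necessity}, I would run Cauchy--Schwarz in $L^2(\varphi)$: since $M(t/\sqrt n)^n=\langle p_n/\varphi,\,e^{t\,\cdot}\rangle_{L^2(\varphi)}$, with $\|e^{t\,\cdot}\|^2_{L^2(\varphi)}=e^{2t^2}$, one gets $M(s)^n\le\sqrt{\chi^2(S_n,\mathcal{N})+1}\;e^{ns^2}$ after setting $s=t/\sqrt n$, hence $M(s)\le(\chi^2(S_n,\mathcal{N})+1)^{1/(2n)}e^{s^2}$. If $\chi^2(S_n,\mathcal{N})\to0$ the right-hand side tends to $e^{s^2}$, giving $M(s)\le e^{s^2}$; strictness for $s\neq0$ follows because equality in Cauchy--Schwarz forces $p_n/\varphi\propto e^{t\,\cdot}$, i.e.\ $S_n$ exactly Gaussian, which is incompatible with the limit unless $X_1$ is Gaussian (where $M(s)=e^{s^2/2}<e^{s^2}$). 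Finiteness for some $n_0$ is immediate from $\chi^2(S_n,\mathcal{N})\to0$.

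For \emph{sufficiency and the expansion}, I would split the integral as $\int_{|w|<\varepsilon}+\int_{|w|\ge\varepsilon}$. On the near-origin part, $\log\Psi(w)=2\,\mathrm{Re}\,\log M(w)-2(\mathrm{Re}\,w)^2=-|w|^2+2\,\mathrm{Re}\sum_{m\ge3}\tfrac{\kappa_m}{m!}w^m$ in terms of the cumulants $\kappa_m$, and Laplace's method (substituting $w=\zeta/\sqrt n$, integrating against $e^{-|\zeta|^2}$) yields $\tfrac n\pi\int_{|w|<\varepsilon}\Psi^n\,dA=1+\sum_{j=1}^{s-2}c_j n^{-j}+O(n^{-(s-1)})$, the half-integer powers vanishing by rotational parity of the Gaussian integral; a direct computation gives $c_1=\kappa_3^2/6=(\mathbb{E}X_1^3)^2/6$ and each $c_j$ a polynomial in $\kappa_3,\dots,\kappa_{j+2}$, equivalently in $\mathbb{E}X_1^k$, $k\le j+2$. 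On the far part I would show $\rho:=\sup_{|w|\ge\varepsilon}\Psi(w)<1$, whence $\int_{|w|\ge\varepsilon}\Psi^n\,dA\le\rho^{\,n-n_0}\int\Psi^{n_0}\,dA=\rho^{\,n-n_0}\,\tfrac{\pi(\chi^2(S_{n_0},\mathcal{N})+1)}{n_0}$ is exponentially small, so it is absorbed into the $O(n^{-(s-1)})$ remainder; in particular $\chi^2(S_n,\mathcal{N})=O(1/n)$.

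The main obstacle is precisely the uniform far-field control, i.e.\ proving $\rho<1$, and here the two hypotheses must be combined with care. The pointwise real-axis inequality $M(s)<e^{s^2}$ alone is too weak: the crude estimate $|M(u+iv)|\le M(u)$ discards all decay in the imaginary direction, and under only $M(s)\le e^{s^2}$ the Hermite series in fact diverges. Finiteness of $\chi^2(S_{n_0},\mathcal{N})$ supplies the missing ingredients — the existence of a density for $X_1+\dots+X_{n_0}$, hence Riemann--Lebesgue decay of $M(w)^{n_0}$ (so $\Psi^{n_0}(w)\to0$ as $|w|\to\infty$), together with the integrability $\int\Psi^{n_0}\,dA<\infty$ that turns the pointwise bound $\Psi<1$ (valid for all $w\neq0$, including pure imaginary $w$ by non-degeneracy) into the uniform bound $\rho<1$. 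Making rigorous the passage from ``pointwise $<1$'' to ``uniformly $\le\rho<1$ on $\{|w|\ge\varepsilon\}$'' — uniform Riemann--Lebesgue over compact ranges of $\mathrm{Re}\,w$ and control as $|\mathrm{Re}\,w|\to\infty$ — is the technical heart of the argument; once it is secured, the near-origin Laplace expansion is routine.
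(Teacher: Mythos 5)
First, a point of comparison: the paper itself contains no proof of this statement --- it is Theorem~\ref{thm_bobkov_et_al}, quoted from \citet{bobkov_et_al_2019} as background (and proved there by characteristic-function analysis). So your proposal can only be judged on its own merits. Your starting identity is attractive and correct: for entire \(M\), the Bargmann--Fock computation does give \(1+\chi^2(S_n,\mathcal{N})=\frac{n}{\pi}\int_{\mathbb{C}}\Psi(w)^n\,dA(w)\) with \(\Psi(w)=|M(w)|^2e^{-2(\mathrm{Re}\,w)^2}\), and your parity argument for the vanishing of half-integer powers in the Laplace expansion is sound (as is \(c_1=(\mathbb{E}X_1^3)^2/6\)). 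One caveat you should state: equating \(\sum_k c_k^2k!\) with \(1+\chi^2(S_n,\mathcal{N})\) requires knowing that \(S_n\) has a density in \(L^2(\varphi)\); in the sufficiency direction this must be \emph{deduced} from finiteness of \(\sum_k c_k^2k!\), which needs a moment-determinacy argument (available here because the MGF is entire), not just Parseval.

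There are, however, two genuine gaps. First, your proof of the \emph{strict} inequality \(M(s)<e^{s^2}\) in the necessity direction is fallacious. If \(M(s_0)=e^{s_0^2}\) for some \(s_0\neq 0\), no equality in any finite-\(n\) Cauchy--Schwarz step is forced: at level \(n\) your inequality reads \(e^{ns_0^2}=M(s_0)^n\leqslant\sqrt{1+\chi^2(S_n,\mathcal{N})}\,e^{ns_0^2}\), which holds strictly whenever \(\chi^2(S_n,\mathcal{N})>0\), so you cannot conclude \(p_n/\varphi\propto e^{t\cdot}\) and no contradiction arises; the limiting argument only yields \(M(s)\leqslant e^{s^2}\). (A correct argument exists inside your own framework: \(M(s_0)=e^{s_0^2}\) makes \(\Psi\) attain its maximal value \(1\) at the interior point \(w=s_0\) as well as at \(w=0\), and this second Laplace peak contributes a quantity bounded away from \(0\) to \(\frac{n}{\pi}\int\Psi^n\,dA\), contradicting \(\chi^2(S_n,\mathcal{N})\to 0\); but that is not what you wrote.) Second --- as you yourself concede --- the far-field bound \(\rho=\sup_{|w|\geqslant\varepsilon}\Psi(w)<1\) is asserted rather than proved, and it is not a routine consequence of your listed ingredients. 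Pointwise \(\Psi<1\) on \(\{w\neq 0\}\), Riemann--Lebesgue decay in the imaginary direction, and integrability of \(\Psi^{n_0}\) give uniform separation from \(1\) only on compact sets; they do not exclude \(\limsup_{u\to\infty}M(u)e^{-u^2}=1\) along the real axis, which is perfectly compatible with the strict pointwise inequality \(M(u)<e^{u^2}\). Ruling this out is exactly where the hypotheses ``strict subgaussianity'' and ``\(\chi^2(S_{n_0},\mathcal{N})<\infty\)'' must interact quantitatively, and it is the substance of the Bobkov--Chistyakov--G\"otze proof. Until that step is supplied, the sufficiency direction, the \(O(1/n)\) bound, and the Edgeworth expansion (whose remainder is supposed to absorb the far field) all remain unproven.
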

The main result of this paper is the following theorem.
\begin{thm}
\label{main_corollary}
 Suppose \(X_1,X_2,\ldots,X_n\) with \(n\geqslant 2\) are independent random variables such that \(\mathbb{E}X_j=0\) and \(\mathbb{E}X_j^2=1\)  for all \(j=1,2,\ldots,n\).
 There exists an absolute constant \(C>0\) such that if
\[
\frac{1}{n}\sum_{j=1}^{n}\chi^2(X_{j},\mathcal{N})\leqslant 0.82
\]
then 
\[
\begin{split}
\chi^2(S_n,\mathcal{N})&\leqslant \frac 1{n-1}\left(\frac{1}{n}\sum_{j=1}^{n}\chi^2(X_{j},\mathcal{N})+C\left(\frac{1}{n}\sum_{j=1}^{n}\chi^2(X_{j},\mathcal{N})\right)^2\right).
\end{split}
\]
If   the independent random variables \(X_1,X_2,\ldots,X_n\) are symmetric (that is \(-X_j\) and \(X_j\) have the same distribution) in addition to having zero mean and variance equal to one,   then there exists an absolute constant \(C_1>0\) that if
\[
\frac{1}{n}\sum_{j=1}^{n}\chi^2(X_{j},\mathcal{N})\leqslant 1.69
\]
then  
\[
\begin{split}
\chi^2(S_n,\mathcal{N})&\leqslant \frac {1}{n^2-1}\left(\frac{1}{n}\sum_{j=1}^{n}\chi^2(X_{j},\mathcal{N})+C_1\left(\frac{1}{n}\sum_{j=1}^{n}\chi^2(X_{j},\mathcal{N})\right)^2\right).
\end{split}
\]
The constants \(C,C_1\) are absolute.
\end{thm}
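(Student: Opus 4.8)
The plan is to compute both \(\chi^2\)-distances through their Hermite expansion and Parseval's identity, and then to compare the resulting coefficients. Writing \(H_k\) for the (probabilist's) Hermite polynomials, which are orthogonal with \(\int H_jH_k\varphi\,dx=k!\,\delta_{jk}\) and satisfy \(e^{tx-t^2/2}=\sum_k\frac{t^k}{k!}H_k(x)\), any density \(p\) with finite \(\chi^2\) satisfies \(p/\varphi=\sum_{k\ge0}\frac{\mathbb EH_k(Y)}{k!}H_k\), so that
\[
\chi^2(Y,\mathcal N)=\sum_{k\ge1}\frac{(\mathbb EH_k(Y))^2}{k!}.
\]
First I would record the identity \(\mathbb E e^{tY-t^2/2}=\sum_k\frac{\mathbb E H_k(Y)}{k!}t^k\) and apply it to \(S_n\): by independence,
\[
\sum_k\frac{\mathbb EH_k(S_n)}{k!}t^k=\mathbb Ee^{tS_n-t^2/2}=\prod_{j=1}^n g_j\!\left(\tfrac{t}{\sqrt n}\right),\qquad g_j(s):=\mathbb Ee^{sX_j-s^2/2}.
\]
Because \(\mathbb EX_j=0\) and \(\mathbb EX_j^2=1\) we have \(g_j(s)=1+\sum_{m\ge3}\frac{c_{j,m}}{m!}s^m\) with \(c_{j,m}=\mathbb EH_m(X_j)\), and \(\beta_j:=\chi^2(X_j,\mathcal N)=\sum_{m\ge3}\frac{c_{j,m}^2}{m!}\).

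Expanding the product over subsets \(S\subseteq\{1,\dots,n\}\) splits \(\mathbb EH_k(S_n)\) into a \emph{singleton} part (one factor \(g_j\)) and a \emph{cross} part (two or more factors). The singleton part contributes to \(\chi^2(S_n,\mathcal N)\) exactly \(\sum_{k\ge3}\frac1{k!}\,n^{-k}\big(\sum_j c_{j,k}\big)^2\). Using the Cauchy--Schwarz bound \(\big(\sum_j c_{j,k}\big)^2\le n\sum_j c_{j,k}^2\) together with \(n^{k-1}\ge n(n-1)\) for \(k\ge3,\ n\ge2\), this is at most \(\frac1{n(n-1)}\sum_j\beta_j=\frac1{n-1}\bar\beta\), where \(\bar\beta=\frac1n\sum_j\beta_j\); this is precisely the linear term of the claimed bound. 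In the symmetric case all odd \(c_{j,m}\) vanish, so the smallest surviving degree is \(k=4\); then \(n^{k-1}\ge n(n^2-1)\) for even \(k\ge4\) and the same computation yields the sharper factor \(\frac1{n^2-1}\).

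The heart of the argument is to control the cross part. By Parseval, \(\chi^2(S_n,\mathcal N)\) equals the singleton square plus twice the singleton--cross correlation plus the pure cross square. Every cross coefficient carries at least two distinct factors \(c_{j,m}c_{j',m'}\) and a correspondingly higher power of \(1/\sqrt n\); I would bound each such contribution by repeated Cauchy--Schwarz, turning products of sums \(\sum_j c_{j,m}\) into powers of \(\sum_j\beta_j=n\bar\beta\), and then sum over the subset size \(r\ge2\) and the admissible degree vectors. The smallness hypothesis \(\bar\beta\le0.82\) (respectively \(1.69\)) is what makes this multi-index series converge, geometrically in \(r\), to a bound of the form \(\frac{C}{n-1}\bar\beta^2\) with an absolute constant; the larger threshold in the symmetric case reflects the absence of the degree-three terms.

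The main obstacle is keeping the coefficient of the linear term equal to exactly \(1\). Crude Cauchy--Schwarz on the singleton--cross correlation produces mixed terms of intermediate order, such as a degree-\(6\) contribution proportional to \(n^{-3}\big(\sum_j c_{j,6}\big)\big(\sum_{i<j}c_{i,3}c_{j,3}\big)\), which are not dominated by \(\bar\beta^2/(n-1)\) uniformly as \(\bar\beta\to0\). The remedy is to exploit the slack in the singleton bound: the gap between \(\frac1{n-1}\bar\beta\) and the exact singleton contribution is strictly positive at every degree. I would split each mixed term by Young's inequality into a piece matched, degree by degree, to this reserve and a genuinely quadratic piece absorbed into \(\frac{C}{n-1}\bar\beta^2\). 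Carrying out this bookkeeping uniformly over \(n\ge2\) and all degrees, and separately verifying that \(\chi^2(S_n,\mathcal N)\) is finite so that Parseval is licit, is the delicate part of the argument; the thresholds \(0.82\) and \(1.69\) are exactly what guarantee the convergence and the absoluteness of \(C\) and \(C_1\).
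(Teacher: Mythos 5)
Your opening computation is sound: the ``singleton'' part of the Hermite expansion of \(S_n\) does give the linear term \(\frac{1}{n(n-1)}\sum_j\chi^2(X_j,\mathcal N)\), and you correctly identify that the mixed singleton--cross correlations are of order \(\bar\beta^{3/2}\) and therefore cannot simply be absorbed into \(C\bar\beta^2/(n-1)\) as \(\bar\beta\to0\). But from that point on the proposal is a plan rather than a proof, and the plan omits exactly the part where all the difficulty sits. The cross part is handled by ``repeated Cauchy--Schwarz \dots then sum over the subset size \(r\ge2\) and the admissible degree vectors,'' and the mixed terms by a Young-inequality ``bookkeeping'' against the slack in the singleton bound; neither estimate is actually produced. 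In particular, nothing in the proposal exhibits the geometric ratio of the series in \(r\), so the central claim --- that the thresholds \(0.82\) and \(1.69\) ``are exactly what guarantee the convergence'' --- is asserted, not derived. Those two numbers are not generic: in the paper they arise because the iteration constant satisfies \(\lim_{p\to0}C_{\{1,2\}}(p)<1.2183\) (the maximum of \(g(x)=1-2e^{-x}+(1-e^{-x})/x\), computed in Appendix \ref{appA}), so that \(1.2183\cdot0.82<1\), and \(0.5893\cdot1.69<1\) in the symmetric case. A direct multinomial expansion of \(\prod_j g_j(t/\sqrt n)\) --- which is essentially Fomin's original route, and which the introduction notes led to fairly restrictive conditions --- has no reason to reproduce these constants; the combinatorial growth of the multinomial coefficients against the \(1/\sqrt{k!}\) normalization must be controlled sharply, and that control is precisely what is missing. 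You also need, and do not verify, that every cross and mixed contribution retains an overall factor \(1/(n-1)\) uniformly in \(n\), since the target bound decays in \(n\).

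For comparison, the paper avoids the global expansion altogether. It uses the leave-one-out decomposition \(S_n=\sqrt{1-\sigma_k^2}\,S_{n;k}+\sigma_kX_k\) together with the Stein-type recurrence \(H_{m+1}(x)=xH_m(x)-H_m'(x)\) to prove the identity of Theorem \ref{stein_recurrent_identity}; a weighted Cauchy--Schwarz bound on the resulting quadratic form (Lemma \ref{lem_ineq_quad_form}, with the explicit constants \(C_J(p)\)) then yields the recursive inequality \eqref{rec_chi_mean_0_var_1},
\[
\chi^2(S_n,\mathcal N)\leqslant \frac{1}{n(n-1)}\sum_{k=1}^n\chi^2(X_k,\mathcal N)+\frac{C_{\{1,2\}}(1/n)}{n}\sum_{k=1}^n\chi^2(X_k,\mathcal N)\,\chi^2(S_{n;k},\mathcal N),
\]
in which the quadratic term automatically carries the product structure \(\chi^2(X_k,\mathcal N)\chi^2(S_{n;k},\mathcal N)\) --- this is what replaces your degree-by-degree absorption scheme. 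Iterating this inequality (Proposition \ref{recurrencies_main}, via Maclaurin's inequality) gives Theorem \ref{main_theorem}, and the corollary follows because \(1.218\cdot0.82<1\) makes the iterated series geometric. If you want to salvage your approach, you would have to carry out the multi-index estimates explicitly and accept whatever (much smaller) threshold they yield; as written, the proposal has a genuine gap at its core step.
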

In other words, we prove that if the distributions of \( X_j \) are, on average, sufficiently close to the standard normal distribution in \(\chi^2\) distance, then the \(\chi^2\) distance between the normalized sum \( S_n \) and the normal distribution is \( O(1/n) \).  The inequalities of the theorem are a consequence of  more precise estimates of our Theorem \ref{main_theorem} from which constants \(C,C_1\) could be readily evaluated. It is important to note that the constants \(0.82\) and \(1.69\) in our Theorem \ref{main_corollary} are not optimal.  Finding the optimal constants would be the aim of further research in this direction.

In the case when the summands of \(S_n\) are identically distributed our theorem means   that if  \(\chi^2(X_{1},\mathcal{N})\leqslant0.82\)   then \(\chi^2(S_n,\mathcal{N})=O(1/n)\), which according to Theorem \ref{thm_bobkov_et_al} implies that \(X_1\) satisfies the subgausian condition   (\ref{sub-gausian_condition}) of \cite{bobkov_et_al_2019}. Indeed, independent of our main result, we prove (see Theorem \ref{thm_subgausian_intro}) that a random variable \( Y \) with zero mean and unit variance satisfies the subgaussian condition if its \(\chi^2\) distance from the standard normal distribution does not exceed \( 0.96 \). If the variable \( Y \) is symmetric with unit variance, we show that \( \chi^2(Y, \mathcal{N}) < 1.97 \) is sufficient for the subgaussian condition (\ref{sub-gausian_condition}) to hold. See \cite{bobkov_et_al_2024} for further references on subgaussian distributions.

The proofs in \cite{bobkov_et_al_2019} rely heavily on the analysis of characteristic functions. The work of \cite{fomin_1982} was based on analysis of  the Parseval's identity
\begin{equation}
\label{parceval_identity_for_sum}
\chi^2(S_n,\mathcal{N})=\sum_{m=1}^\infty\frac{\bigl(\mathbb{E}H_m(S_n)\bigr)^2}{m!},	
\end{equation}
where \(H_n(x)\) are Hermite polynomials of \(n\)-th order. His approach relied on analysis of certain recurrence identities that quantities \(\mathbb{E}H_m(S_n)\) inside the Parseval's identity (\ref{parceval_identity_for_sum}) satisfy. Our approach is closer to the original Fomin's approach with   ideas from Stein's approach involved in the crucial step of our analysis. Specifically, Stein’s approach to assessing the closeness of a random variable \( S_n \) to the standard normal distribution involves bounding expectations of the form
\[
\mathbb{E}g(S_n),
\]
where \( g \) is a test function satisfying
\[
\int_{-\infty}^\infty g(t)\varphi(t)\,dt = 0.
\]
The core idea is to express \( g \) via \emph{Stein’s equation}:
\begin{equation*}
g(x) = x f(x) - f'(x),
\end{equation*}
so that
\begin{equation}
\label{steins_equation}
\mathbb{E}g(S_n) = \mathbb{E} \bigl( S_n f(S_n) - f'(S_n) \bigr).
\end{equation}
The right-hand side of this expression can often be bounded more easily using properties of the summands of \( S_n \), such as independence, weak dependence, or control of covariances.
A key insight in our work is the observation that Hermite polynomials satisfy a recurrence identity analogous to Stein’s equation:
\[
H_{n+1}(x) = x H_n(x) - H_n'(x),
\]
which immediately implies
\[
\mathbb{E}H_{m+1}(S_n) = \mathbb{E} \bigl( S_n H_m(S_n) - H_m'(S_n) \bigr).
\]
The right-hand side is structurally identical to that in Stein’s identity~\eqref{steins_equation}. This identity allows us to derive recurrence relations for \( \mathbb{E} H_m(S_n) \) that  lead to sharper upper bounds for the \( \chi^2 \)-distance than those obtained by more direct approach that was used by Fomin. Our approach follows the general framework of  paper by \cite{zacharovas_2023} which also attempted to adapt the ideas of Stein's approach to the analysis of \(\chi^2\) distance in the context of Poisson approximation. In the context of Poisson approximation, we successfully obtained estimates for the \( \chi^2 \)-distance that are comparable to those previously derived by complex-analytic methods by \cite{zacharovas_hwang_2010}.

\section{Proofs}
\subsection{\(\chi^2\) distance and  Hermite polynomials} In this section we will summarize all the  properties of Hermite polynomials that will be necessary for later presentation. None of the material presented here is original, the aim of this section is to make the later material more accessible to a reader with no previous experience with Hermite  polynomials and their properties.

Hermite polynomials can be defined in several equivalent ways. We prefer to define them as polynomials that satisfy the identity
\begin{equation}
	\label{gen_f_for_hermitian}
\sum_{n=0}^{\infty}\frac{H_n(x)}{n!}z^n=e^{xz-z^2/2}.
\end{equation}
Hence applying the formula for Taylor coefficients of an infinite series \(H_n(x)=\frac{d^n}{dz^n}e^{xz-z^2/2}\Bigl|_{z=0}\) we can easily compute any number of first Hermite polynomials. For example
\[
\begin{split}
H_0(x)=1,\quad
H_1(x)=x,\quad
H_2(x)=x^2-1,\quad
H_3(x)=x^3-3x
\end{split}
\]
and so on.
In what follows we will only need the properties of these polynomials that are summarized in the next theorem.
\begin{thm}
\begin{enumerate}The following properties of Hermitian polynomials are true:
\item (Parseval's identity)
If for the function \(f(x)\)  defined on the whole real line  the integral
\[
\int_{-\infty}^{+\infty}\bigl|f(x)\bigr|^2 \phi(x)\,dx
\]
is finite, then
\begin{equation}
\label{parseval_general}
\int_{-\infty}^{+\infty}\bigl|f(x)\bigr|^2 \phi(x)\,dx=\sum_{n=0}^\infty\frac{1}{n!}\left|
\int_{-\infty}^{+\infty}f(x)H_n(x) \phi(x)\,dx
\right|^2.
\end{equation}

	\item Hermite polynomials of different order are related by the recurrent identity
	\begin{equation}
	\label{hermite_recurrence}
	H_{n+1}(x)=xH_n(x)-H_n'(x).	
	\end{equation}
\item Suppose numbers \(\alpha,\beta\) are such that
	\[
\alpha ^{2}+\beta ^{2}=1,
\]
then the following addition formula for Hermite polynomials is true
\begin{equation}
\label{hermit_sum}
H_{m}\left( x\alpha+y\beta\right)=\sum ^{n}_{k=0}\binom{m}{k}H_{m-k}\left( {x}\right) H_{k}\left( {y}\right) \alpha ^{m-k}\beta ^{k}	
\end{equation}
and if \(\beta\not=0\),  then
\begin{equation}
\label{hermit_sum_diff}
H_{m}'\left( x\alpha+y\beta\right)=\sum ^{n}_{k=0}\binom{m}{k}H_{m-k}\left( {x}\right) H_{k}'\left( {y}\right) \alpha ^{m-k}\beta ^{k-1}.	
\end{equation}

\end{enumerate}
\end{thm}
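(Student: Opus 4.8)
The plan is to derive every assertion from the single generating-function identity~\eqref{gen_f_for_hermitian}, treating it as a power series in $z$ whose coefficients encode the $H_n(x)$, so that all three parts reduce to differentiation, multiplication, and coefficient extraction. I expect the only genuinely analytic input to arise in the completeness half of Parseval's identity.

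For the recurrence~\eqref{hermite_recurrence} I would differentiate~\eqref{gen_f_for_hermitian} separately in $x$ and in $z$. Differentiating in $x$ gives $\sum_{n}\frac{H_n'(x)}{n!}z^n = z\,e^{xz-z^2/2}$, and matching the coefficient of $z^n$ yields $H_n'(x)=nH_{n-1}(x)$. Differentiating in $z$ gives $\sum_{n}\frac{H_n(x)}{n!}\,n z^{n-1}=(x-z)e^{xz-z^2/2}$, and matching the coefficient of $z^m$ produces the three-term recurrence $H_{m+1}(x)=xH_m(x)-mH_{m-1}(x)$. Substituting the first relation in the form $mH_{m-1}(x)=H_m'(x)$ into this three-term recurrence gives exactly~\eqref{hermite_recurrence}.

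For the addition formula~\eqref{hermit_sum} the key observation is that $\alpha^2+\beta^2=1$ lets me split the quadratic term: $e^{(x\alpha+y\beta)z-z^2/2}=e^{x(\alpha z)-(\alpha z)^2/2}\,e^{y(\beta z)-(\beta z)^2/2}$. Expanding each factor via~\eqref{gen_f_for_hermitian} with arguments $\alpha z$ and $\beta z$, forming the Cauchy product, and reading off the coefficient of $z^m$ (then multiplying by $m!$) gives~\eqref{hermit_sum}. The differentiated version~\eqref{hermit_sum_diff} then follows by differentiating~\eqref{hermit_sum} in $y$, which brings down a factor $\beta$ on the left and replaces each $H_k(y)$ by $H_k'(y)$ on the right; dividing by $\beta\neq0$ yields the stated identity.

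For Parseval's identity~\eqref{parseval_general} I would first establish orthogonality of the $H_n$ with respect to the weight $\phi$. Multiplying the generating functions at parameters $z$ and $w$ and integrating against $\phi$, a single Gaussian integral gives $\int e^{xz-z^2/2}e^{xw-w^2/2}\phi(x)\,dx=e^{zw}$; expanding both sides and comparing coefficients of $z^m w^n$ yields $\int H_m(x)H_n(x)\phi(x)\,dx=n!\,\delta_{mn}$. Thus $\{H_n/\sqrt{n!}\}$ is an orthonormal system in $L^2(\mathbb{R},\phi\,dx)$, and~\eqref{parseval_general} is precisely the abstract Parseval equality for this system applied to $f$. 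The one step requiring more than formal manipulation of series---and the part I expect to be the main obstacle---is \emph{completeness}: one must rule out a nonzero $f\in L^2(\phi)$ orthogonal to every $H_n$, so that Bessel's inequality becomes an equality. I would argue this in the standard way: such an $f$ is orthogonal to every polynomial, hence the entire function $g(z)=\int f(x)e^{zx}\phi(x)\,dx$ has all derivatives vanishing at the origin, so $g\equiv0$, and the vanishing of this Laplace-type transform forces $f=0$ almost everywhere. This density argument is the only place where the analytic structure of $L^2(\phi)$, rather than the algebra of the generating function, is essential.
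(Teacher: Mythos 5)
Your proposal is correct, but it is considerably more self-contained than the paper's own proof. The paper proves only the addition formulas \eqref{hermit_sum} and \eqref{hermit_sum_diff} --- and does so exactly as you do, splitting the generating function $e^{(x\alpha+y\beta)z-z^2/2}$ using $\alpha^2+\beta^2=1$ and then differentiating in $y$ and dividing by $\beta$ --- while for Parseval's identity \eqref{parseval_general} and the recurrence \eqref{hermite_recurrence} it simply cites Szeg\H{o}'s book as classical results. You instead derive the recurrence from \eqref{gen_f_for_hermitian} (via $H_n'=nH_{n-1}$ from the $x$-derivative and the three-term recurrence from the $z$-derivative), and for Parseval you prove orthogonality $\int H_m H_n \varphi\,dx = n!\,\delta_{mn}$ by the Gaussian generating-function computation and then supply the completeness argument: a nonzero $f\in L^2(\varphi\,dx)$ orthogonal to all polynomials would make the entire function $g(z)=\int f(x)e^{zx}\varphi(x)\,dx$ vanish identically, forcing $f=0$ a.e. by Fourier uniqueness. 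This correctly isolates what the paper's citation hides: orthogonality alone gives only Bessel's inequality, and completeness is the genuine analytic content of \eqref{parseval_general}. What the paper's route buys is brevity and reliance on standard references for standard facts; what yours buys is a complete, verifiable derivation of all four identities from the single identity \eqref{gen_f_for_hermitian}, at the modest cost of sketching the density argument for the Gaussian-weighted $L^2$ space (your sketch is the standard one and is sound, though a full write-up would need to justify that $g$ is entire and that differentiation under the integral is permitted).
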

\begin{proof} All the listed properties with exception of formulas (\ref{hermit_sum}) and (\ref{hermit_sum_diff}) are classical results that can be found in many textbooks on mathematical analysis see e.g. \cite{szego_1975}. Addition formula  (\ref{hermit_sum}) is an easy consequence of the identity 
\[
e^{(x\alpha+y\beta)z-\frac{z^2}2}= e^{ x (\alpha z)-\frac{(\alpha z)^2}2} e^{ x (\beta z)-\frac{(\beta z)^2}2}
\]
between generating functions (\ref{gen_f_for_hermitian}) of Hermite polynomials for variables \(x\), \(y\) and \(x\alpha+y\beta\). For  \(\alpha=\beta=1/\sqrt{2}\) this formula also appears in \cite{szego_1975} and for general \(\alpha,\beta\) it  was already used by \cite{bobkov_et_al_2019} for the analysis of \(\chi^2\) distance. The formula (\ref{hermit_sum_diff}) is obtained by differentiating the  formula (\ref{hermit_sum}) with respect to \(y\) and dividing both sides by \(\beta\).

\end{proof}
Let \( p(x) \) be the density of a random variable \( Y \). If \(\chi^2(Y, \mathcal{N})\) is finite, then the integral  
\[
\int_{-\infty}^{+\infty} \left( \frac{p(x)}{\varphi(x)} \right)^2 \varphi(x) \,dx
\]
is also finite. Consequently, Parseval's identity (\ref{parseval_general}), applied to the function  
\[
f(x) = \frac{p(x)}{\varphi(x)},
\]
yields the identity  
\[
\int_{-\infty}^{+\infty} \left( \frac{p(x)}{\varphi(x)} \right)^2 \varphi(x) \,dx
= \sum_{n=0}^{\infty} \frac{1}{n!} \left|
\int_{-\infty}^{+\infty} p(x) H_n(x) \,dx
\right|^2.
\]
Since \(p(x)\) is a probability density of a random variable \(Y\), the integrals inside the Parceval identity can be expressed as expectations
\[
\int_{-\infty}^{+\infty}p(x)H_n(x)=\mathbb{E}H_n(Y).
\]
Hence taking into account that \(\mathbb{E}H_0(Y)=1\) we can express the \(\chi^2(Y,\mathcal{N})\) in terms of moments of Hermite polynomials of  \(Y\) as 
\[
\chi^2(Y,\mathcal{N})=\int ^{+\infty }_{-\infty } \left( \dfrac{p\left( x\right) }{\varphi\left( x\right) }\right) ^{2}\varphi(x)\,dx-1=\sum_{n=1}^\infty\frac{\bigl(\mathbb{E}H_n(Y)\bigr)^2}{n!}.
\]
Replacing here \(Y=S_n\) we get the Parceval's identity in the form (\ref{parceval_identity_for_sum}) that will be used throughout the paper.

Note that \( H_1(x) = x \) and \( H_2(x) = x^2 - 1 \). Therefore, if the random variable \( Y \) has zero mean (\(\mathbb{E}Y = 0\)) and unit variance (\(\mathbb{E}Y^2 = 1\)), then  
\[
\mathbb{E}H_1(Y) = \mathbb{E}H_2(Y) = 0.
\]
As a result, the summation in Parseval's identity should begin at \( n = 3 \) instead of \( n = 1 \).

\subsection{Recurrent relations for \(\chi^2\) distance}
Before proving our main result on sums of random variables with equal variance, we first examine the more general case where the summands of \( S_n \) may have different variances. Let
\[
S_n=\sigma_1X_1+\sigma_2X_2+\cdots+\sigma_nX_n
\]
where \(X_j\) are independent, \(\mathbb{E}X_j=0\) and \(\mathbb{E}X_j^2=1\) and \(\sigma_1^2+\sigma_2^2+\cdots+\sigma_n^2=1\) with \(\sigma_j^2<1\) for all \(j=1,2,\ldots,n\).
Let us also define
\[
S_{n;k}=\frac{S_n-\sigma_kX_k}{\sqrt{1-\sigma_k^2}}
\]
thus
\[
S_n=\sqrt{1-\sigma_k^2}S_{n;k}+\sigma_kX_k.
\]
To evaluate the right-hand side of Parseval's identity (\ref{parceval_identity_for_sum}), which expresses \(\chi^2(S_n, \mathcal{N})\) in terms of the expectations of Hermite polynomials of different orders, we will use the recurrence relation given in the following theorem.
\begin{thm}
\label{stein_recurrent_identity}
The  identity
\[
\begin{split}
\mathbb{E}H_{m}(S_n)
&=\sum_{k=1}^{n}\frac{1}{m}\sum ^{m}_{j=1}\binom{m}{j}\mathbb{E}H_{m-j}\left( S_{n;k}\right)j\mathbb{E}H_j\left( X_k\right) (1-\sigma_k^2) ^{(m-j)/2}\sigma_k^{j}
\end{split}
\] 
holds for all \(m\geqslant 1\) and \(n\geqslant 2\).
\end{thm}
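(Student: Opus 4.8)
The plan is to run the Stein-type recurrence for Hermite polynomials one step and then expand each resulting term around the $k$-th summand. Applying the recurrence \eqref{hermite_recurrence} inside the expectation gives the starting point
\[
\mathbb{E}H_m(S_n)=\mathbb{E}\bigl(S_nH_{m-1}(S_n)\bigr)-\mathbb{E}\bigl(H_{m-1}'(S_n)\bigr),
\]
and I would treat the two terms on the right separately, but expand both through the same decomposition $S_n=\sqrt{1-\sigma_k^2}\,S_{n;k}+\sigma_kX_k$ in which $S_{n;k}$ and $X_k$ are independent.

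For the first term I would use $S_n=\sum_{k=1}^n\sigma_kX_k$ to write $\mathbb{E}(S_nH_{m-1}(S_n))=\sum_k\sigma_k\,\mathbb{E}(X_kH_{m-1}(S_n))$, then apply the addition formula \eqref{hermit_sum} to $H_{m-1}(S_n)$ with $\alpha=\sqrt{1-\sigma_k^2}$, $\beta=\sigma_k$. Independence factorizes each resulting expectation into $\mathbb{E}H_{m-1-j}(S_{n;k})\cdot\mathbb{E}(X_kH_j(X_k))$, and the relation $xH_j(x)=H_{j+1}(x)+jH_{j-1}(x)$ (immediate from \eqref{hermite_recurrence} together with the standard identity $H_j'=jH_{j-1}$) splits this into a ``raising'' part carrying $\mathbb{E}H_{j+1}(X_k)$ and a ``lowering'' part carrying $j\,\mathbb{E}H_{j-1}(X_k)$.

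The crucial step is to make the derivative term $\mathbb{E}(H_{m-1}'(S_n))$, which carries no $k$-dependence, enter the same per-summand bookkeeping. I would insert the partition of unity $1=\sum_k\sigma_k^2$, use $H_{m-1}'=(m-1)H_{m-2}$, and expand each copy of $\mathbb{E}H_{m-2}(S_n)$ by \eqref{hermit_sum} (equivalently, apply the differentiation addition formula \eqref{hermit_sum_diff} directly). The point I expect to drive the whole proof is that this contribution cancels \emph{exactly} against the ``lowering'' part $j\,\mathbb{E}H_{j-1}(X_k)$ produced above; after matching the ranges, the cancellation reduces to the binomial identity $(i+1)\binom{m-1}{i+1}=(m-1)\binom{m-2}{i}$.

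After the cancellation only the ``raising'' contribution survives, namely $\sum_k\sum_{j=0}^{m-1}\binom{m-1}{j}\mathbb{E}H_{m-1-j}(S_{n;k})\,\mathbb{E}H_{j+1}(X_k)\,(1-\sigma_k^2)^{(m-1-j)/2}\sigma_k^{j+1}$. Reindexing $j\mapsto j-1$ and using $\binom{m-1}{j-1}=\tfrac{j}{m}\binom{m}{j}$ turns this into the asserted formula, with the constant $\tfrac1m$ and the factor $j$ appearing automatically. The main obstacle is purely organizational: keeping the summation ranges, the powers of $\sigma_k$ and $\sqrt{1-\sigma_k^2}$, and the two sources of lower-order terms aligned so that the cancellation is exact; once the indices are set up, the algebra is routine. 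As an independent sanity check, one may verify the same coefficients by taking the logarithmic derivative of the generating-function identity $\mathbb{E}e^{S_nz-z^2/2}=\prod_k\mathbb{E}e^{\sigma_kX_kz-\sigma_k^2z^2/2}$ and extracting the coefficient of $z^{m-1}$, which confirms the factor $j$ and the constant $\tfrac1m$.
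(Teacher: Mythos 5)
Your proof is correct and takes essentially the same route as the paper: the same Stein-type starting identity \(\mathbb{E}H_m(S_n)=\mathbb{E}\bigl(S_nH_{m-1}(S_n)-H_{m-1}'(S_n)\bigr)\), the same per-summand decomposition (writing \(S_n=\sum_k\sigma_kX_k\) in the first term and inserting \(1=\sum_k\sigma_k^2\) in the derivative term), the same addition formula (\ref{hermit_sum}), and the same final binomial reindexing. The only difference is organizational: the paper expands the derivative term with the differentiated addition formula (\ref{hermit_sum_diff}), so the per-summand combination is literally Stein's equation \(\mathbb{E}\bigl(X_kH_j(X_k)-H_j'(X_k)\bigr)=\mathbb{E}H_{j+1}(X_k)\), whereas you reduce all derivatives to lower-order polynomials via \(H_n'=nH_{n-1}\) and verify the same cancellation through the binomial identity \((i+1)\binom{m-1}{i+1}=(m-1)\binom{m-2}{i}\), which is an equivalent piece of algebra.
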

\begin{proof}
	Applying the identity (\ref{hermite_recurrence}) we can evaluate
\begin{equation}
\label{eq_steins_decomposition}
\begin{split}
\mathbb{E}H_{m+1}(S_n)&=\mathbb{E}\bigl(S_nH_{m}(S_n)-H_{m}'(S_n)\bigr)
\\
&=\sum_{k=1}^{n}\biggl(\mathbb{E}\sigma_kX_kH_m(\sqrt{1-\sigma_k^2}S_{n;k}+\sigma_kX_k)-\sigma_k^2\mathbb{E}H_m'(\sqrt{1-\sigma_k^2}S_{n;k}+\sigma_kX_k)\biggr)
\end{split}
	\end{equation}
for all \(m\geqslant 0\).
The identities (\ref{hermit_sum}) and (\ref{hermit_sum_diff}) applied with  \(\alpha=\sigma_k\),  \(\beta=\sqrt{1-\sigma_k^2}\), \(y=S_{n;k}\) and \(x=X_k\) take form
\[
\begin{split}
H_{m}\left( \sqrt{1-\sigma_k^2}S_{n;k}+\sigma_kX_k\right)& =\sum ^{m}_{j=0}\binom{m}{j}H_{m-j}\left( S_{n;k}\right) H_{j}\left( X_k\right) (1-\sigma_k^2) ^{(m-j)/2}\sigma_k^{j},
\\
H_{m}'\left( \sqrt{1-\sigma_k^2}S_{n;k}+\sigma_kX_k\right)&=\frac{1}{\sigma_k}\sum ^{m}_{j=0}\binom{m}{k}H_{m-j}\left(S_{n;k}\right) H_{j}'\left( X_k\right)(1-\sigma_k^2) ^{(m-j)/2}\sigma_k^{j}.
\end{split}
\]
Plugging the above two expressions into the identity (\ref{eq_steins_decomposition}) and taking into account that \(S_{n;k}\) and \(X_k\) are independent, we obtain
\[
\begin{split}
\mathbb{E}H_{m+1}(S_n)&=\sum_{k=1}^{n}\sum ^{m}_{j=0}\binom{m}{j}\mathbb{E}H_{m-j}\left( S_{n;k}\right)\mathbb{E}\biggl(X_k H_{j}\left( X_k\right) -H_{j}'\left( X_k\right)\biggr)(1-\sigma_k^2) ^{(m-j)/2}\sigma_k^{j+1}.
\end{split}
\]
Identity (\ref{hermite_recurrence}) implies that \(X_k H_{j}\left( X_k\right) -H_{j}'\left( X_k\right)=H_{j+1}\left( X_k\right)\), therefore
\[
\begin{split}
\mathbb{E}H_{m+1}(S_n)&=\sum_{k=1}^{n}\sum ^{m}_{j=0}\binom{m}{j}\mathbb{E}H_{m-j}\left( S_{n;k}\right)\mathbb{E}H_{j+1}\left( X_k\right) (1-\sigma_k^2) ^{(m-j)/2}\sigma_k^{j+1}.
\end{split}
\]
Applying the binomial identity
 \(
 \binom{m}{j}=\binom{m+1}{j+1}\frac{j+1}{m+1}
 \)
we can rewrite our expression as
\[
\begin{split}
\mathbb{E}H_{m+1}(S_n)
&=\sum_{k=1}^{n}\frac{1}{(m+1)}\sum ^{m+1}_{j=1}\binom{m+1}{j}\mathbb{E}H_{m+1-j}\left( S_{n;k}\right)j\mathbb{E}H_j\left( X_k\right) (1-\sigma_k^2) ^{(m+1-j)/2}\sigma_k^{j}.
\end{split}
\] 
Replacing here \(m\) with \(m-1\) we obtain the recurrence of the theorem.
\end{proof}
We will routinely use the notation 
\[
B(m,k,p)=\binom{m}{k}p^k(1-p)^{m-k}.
\]
\begin{lem}
\label{lem_ineq_quad_form}
	 Suppose
\[
\begin{split}
	&a_1,a_2\ldots
	\\
	&b_0,b_1,\ldots
\end{split}
\]
are two arbitrary sequences of real numbers, then 
	\[
\begin{split}
&\sum_{m=1}^\infty \left|\frac{1}{mp}\sum ^{m}_{k=1}b_{m-k}ka_k\sqrt{B\left(m,k,p\right)}\right|^2
\\
&\qquad
\leqslant
\left(\sum_{{k\in \mathbb{Z}^+\setminus J}}{p^{k-2}}\right)b_0^2\left(\sum ^{\infty}_{k=1}a_k^2\right)+C_J(p)\left(\sum ^{\infty}_{k=1}a_k^2\right)\left(\sum_{j=1 }^\infty b_{j}^2	\right),
\end{split}
\]
where \(0<p<1\) and \(J\) is a set of indices  such that \(a_j=0\) for all \(j\in J\) and
\[
C_J(p)=\max_{s\in \mathbb{Z}^+}h_J(s,p),
\]
 where \(h_J(s,p)\) is defined as a function
\begin{equation}
\label{def_h_J}
h_J(s,p)=\frac{(1-p)^s}{p^2}\sum_{k\in \mathbb{Z}^+\setminus J}\frac{k^2}{(k+s)^2}\binom{k+s}{k}p^k.
\end{equation}
Here and throughout, \(\mathbb{Z}^+\) denotes the set of all positive integers.
\end{lem}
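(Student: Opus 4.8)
The plan is to bound the left-hand side by a single, carefully chosen application of the Cauchy--Schwarz inequality that isolates the factor $\sum_k a_k^2$, followed by an interchange of summation order that reproduces exactly the two coefficients appearing on the right.

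First I would read the inner sum as an inner product against the sequence $(a_k)$. Since $a_k=0$ for $k\in J$, for each fixed $m$ the index $k$ effectively ranges over $\{1,\dots,m\}\setminus J$, and Cauchy--Schwarz against $(a_k)$ gives
\[
\left|\frac{1}{mp}\sum_{k=1}^{m}b_{m-k}ka_k\sqrt{B(m,k,p)}\right|^2
\leqslant\left(\sum_{k=1}^{\infty}a_k^2\right)\frac{1}{m^2p^2}\sum_{k\in\{1,\dots,m\}\setminus J}k^2 B(m,k,p)\,b_{m-k}^2,
\]
where I use $\sum_{k\in\{1,\dots,m\}\setminus J}a_k^2\leqslant\sum_{k=1}^{\infty}a_k^2$. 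The whole point of splitting the product this way is that it leaves behind the weight $k^2B(m,k,p)/(m^2p^2)$ attached to $b_{m-k}^2$, which is precisely the shape needed to recover $h_J$.

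Next I would sum over $m\geqslant1$ and interchange the order of summation through the substitution $j=m-k$, so that $m=k+j$ with $j\geqslant0$ and $k\in\mathbb{Z}^+\setminus J$. Using $B(k+j,k,p)=\binom{k+j}{k}p^k(1-p)^{j}$, the double sum becomes
\[
\sum_{m=1}^{\infty}\frac{1}{m^2p^2}\sum_{k\in\{1,\dots,m\}\setminus J}k^2 B(m,k,p)\,b_{m-k}^2
=\sum_{j=0}^{\infty}b_j^2\,\frac{(1-p)^j}{p^2}\sum_{k\in\mathbb{Z}^+\setminus J}\frac{k^2}{(k+j)^2}\binom{k+j}{k}p^k.
\]
The term $j=0$ collapses (since $k^2/(k+j)^2=1$ and $\binom{k}{k}=1$) to $b_0^2\sum_{k\in\mathbb{Z}^+\setminus J}p^{k-2}$, which is the first summand of the claim, while for every $j\geqslant1$ the inner factor is exactly $h_J(j,p)$ from \eqref{def_h_J}.

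Finally I would apply the trivial bound $\sum_{j=1}^{\infty}b_j^2\,h_J(j,p)\leqslant\bigl(\max_{s\in\mathbb{Z}^+}h_J(s,p)\bigr)\sum_{j=1}^{\infty}b_j^2=C_J(p)\sum_{j=1}^{\infty}b_j^2$ and reassemble, obtaining the stated inequality. I do not expect a serious obstacle here: the argument is essentially one correctly chosen Cauchy--Schwarz step plus bookkeeping. The only points needing care are using the hypothesis $a_j=0$ on $J$ so that the surviving weights are supported on $k\in\mathbb{Z}^+\setminus J$ (otherwise both the coefficient of $b_0^2$ and the definition of $h_J$ would involve the wrong index set), and justifying the interchange of summation, which is legitimate because every term is nonnegative, so Tonelli applies.
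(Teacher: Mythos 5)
Your proposal is correct and follows essentially the same route as the paper's own proof: a single Cauchy--Schwarz step that isolates $\sum_k a_k^2$ and leaves the weight $k^2B(m,k,p)/(m^2p^2)$ on the $b_{m-k}^2$ terms, followed by the substitution $j=m-k$ and interchange of summation, identification of the $j=0$ term as $b_0^2\sum_{k\in\mathbb{Z}^+\setminus J}p^{k-2}$, and the bound $h_J(j,p)\leqslant C_J(p)$ for $j\geqslant1$. Your explicit appeal to Tonelli to justify the interchange is a point the paper leaves implicit, but the argument is otherwise identical.
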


\begin{proof}
Let us denote
\begin{equation}
\label{def_c_m}
c_m
=\frac{1}{mp}\sum ^{m}_{k=1}b_{m-k}ka_k\sqrt{B\left(m,k,p\right)}
\end{equation}
for all \(m\geqslant 1\).
	Applying Cauchy inequality we get
\[
\begin{split}
\bigl|c_m\bigr|^2
&=\frac{1}{m^2p^2}\left(\sum ^{m}_{k=1}b_{m-k}ka_k\sqrt{B\left(m,k,p\right)}\right)^2
\\
&\leqslant\frac{1}{m^2p^2}\left(\sum ^{m}_{k=1}a_k^2\right)\left(\sum_{\substack{1\leqslant k\leqslant m\\k\not\in J}}\bigl|b_{m-k}\bigr|^2k^2B\left(m,k,p\right)	\right)
\\
&\leqslant\left(\sum ^{\infty}_{k=1}a_k^2\right)\frac{1}{m^2p^2}\left(\sum_{\substack{1\leqslant k\leqslant m\\k\not\in J}}\bigl|b_{m-k}\bigr|^2k^2B\left(m,k,p\right)	\right).
\end{split}
\]
Summing this inequality from \(m=1\) to \(\infty\) we obtain
\[
\begin{split}
\sum_{m=1}^\infty \bigl|c_m\bigr|^2&\leqslant\left(\sum_{k=1}^\infty  a_k^2\right)\sum_{m=1}^\infty\frac{1}{m^2p^2}\left(\sum_{\substack{1\leqslant k\leqslant m\\k\not\in J}}\bigl|b_{m-k}\bigr|^2k^2B\left(m,k,p\right)	\right)
\\
&\leqslant
\left(\sum ^{\infty}_{k=1}a_k^2\right)\sum ^{\infty}_{\substack{k=1\\k\not\in J}}\sum_{m=k}^\infty \bigl|b_{m-k}\bigr|^2\frac{k^2B\left(m,k,p\right)}{m^2p^2}	
\\
&\leqslant
\left(\sum ^{\infty}_{k=1}a_k^2\right)\sum ^{\infty}_{\substack{k=1\\k\not\in J}}\sum_{s=0}^\infty \bigl|b_{s}\bigr|^2\frac{k^2B\left(k+s,k,p\right)}{(k+s)^2p^2}	
\\
&\leqslant
\left(\sum ^{\infty}_{k=1}a_k^2\right)\sum_{s=0}^\infty \bigl|b_{s}\bigr|^2\sum ^{\infty}_{\substack{k=1\\k\not\in J}}\frac{k^2B\left(k+s,k,p\right)}{(k+s)^2p^2}	
\end{split}
\]
thus
\[
\begin{split}
\sum_{m=1}^\infty \bigl|c_m\bigr|^2
&\leqslant
\left(\sum ^{\infty}_{k=1}a_k^2\right)\sum_{s=0}^\infty \bigl|b_{s}^{(n-1)}\bigr|^2h_J(s,p)	
\end{split}
\]
where
\[
h_J(s,p)=\sum _{{k\in \mathbb{Z}^+\setminus J}}\frac{k^2B\left(k+s,k,p\right)}{(k+s)^2p^2}=\frac{(1-p)^s}{p^2}\sum_{{k\in \mathbb{Z}^+\setminus J}}\frac{k^2}{(k+s)^2}\binom{k+s}{k}p^k
\]
note that
\[
h_J(0,p)=\sum _{k\in \mathbb{Z}^+\setminus J}\frac{B\left(k,k,p\right)}{p^2}=\sum _{k\in \mathbb{Z}^+\setminus J}\frac{p^k}{p^2}
\]
therefore
\[
\begin{split}
\sum_{m=1}^\infty \bigl|c_m\bigr|^2
&\leqslant
\left(\sum ^{\infty}_{k=1}a_k^2\right)\sum _{k\in \mathbb{Z}^+\setminus J}{p^{k-2}}+\left(\sum ^{\infty}_{k=1}a_k^2\right)\sum_{s=1}^\infty \bigl|b_{s}\bigr|^2h_J(s,p).
\end{split}
\]
Evaluating \(h_J(s,p)\) with its upper bound \(C_J(p)\) on the right side of the above inequality and recalling the definition (\ref{def_c_m}) of \(c_m\) we complete the proof of the lemma.
\end{proof}

\begin{lem}
\label{lem_C_estimates} The following estimates for \( C_J(p) \) hold for \( 0 < p < 1 \), where \( J \) is a specific set defined below:
\begin{enumerate}
	\item If \( J = \{1,2\} \), then
	\[
	C_{\{1,2\}}(p) < 1.2183 + 1.6066 \frac{p}{1-p}.
	\]
	\item For the set \( J_{sym} = \{2\} \cup \{2m+1 \mid m \geqslant 0\} \), which includes the number \( 2 \) as well as all odd positive integers, the following inequality holds:
	\[
	C_{J_{sym}}(p) < 0.5893 + 0.9724 \frac{p}{1-p} + 0.1405 \frac{p^2}{(1-p^2)^2}.
	\]
\end{enumerate}
Upper bounds for the first nine values of \( C_{\{1,2\}}(1/n) \) and \( C_{J_{sym}}(1/n) \) are provided in Table \ref{tab:rounded_estimates}.
 \begin{table}[h]
    \centering
    \begin{tabular}{|c|c|c|c|c|c|c|c|c|c|}
        \hline
        $n$ & $2$ & $3$ & $4$ & $5$ & $6$ & $7$ & $8$ & $9$ & $10$ \\
        \hline
        $C_{\{1,2\}}(1/n)$  & 2.1327  & 1.6582  & 1.5043  & 1.4293  & 1.3851  & 1.3560  & 1.3354  & 1.3202  & 1.3085  \\
        $C_{J_{sym}}(1/n)$ & 1.0570  & 0.8168  & 0.7387  & 0.7001  & 0.6773  & 0.6622  & 0.6515  & 0.6436  & 0.6374  \\
        \hline
    \end{tabular}
    \caption{The values in the table represent upper bounds for $C_{\{1,2\}}(1/n)$ and $C_{\text{sym}}(1/n)$, meaning the actual values do not exceed these estimates.}
    \label{tab:rounded_estimates}
\end{table}
\end{lem}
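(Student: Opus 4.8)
The plan is to bound $h_J(s,p)$ uniformly over $s\in\mathbb Z^+$ and $0<p<1$, since $C_J(p)=\max_s h_J(s,p)$. The natural starting point is that the binomial weights in (\ref{def_h_J}) are, up to a factor, negative-binomial probabilities: from $\sum_{k\ge0}\binom{k+s}{k}p^k=(1-p)^{-(s+1)}$ one has $\binom{k+s}{k}p^k(1-p)^s=\frac{1}{1-p}\mathbb P(K=k)$ for a negative-binomial $K$. The real difficulty is the factor $\frac{k^2}{(k+s)^2}$: a crude estimate $\frac{k^2}{(k+s)^2}\le1$, or even dropping a single factor $\frac{k}{k+s}$, destroys the $s$-dependence and yields a bound that blows up as $p\to0$, so both factors must be kept throughout.

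First I would sum the series in closed form. Using the identity $\frac{k}{k+s}\binom{k+s}{k}=\binom{k+s-1}{k-1}$ together with the integrated identity
\[
\sum_{k\ge0}\frac{1}{k+s+1}\binom{k+s}{k}p^k=\frac{1}{p^{s+1}}\int_0^p\frac{t^s}{(1-t)^{s+1}}\,dt,
\]
and the substitution $w=t/(1-t)$ (so the integral becomes $\int_0^{P}\frac{w^s}{1+w}\,dw$ with $P=\frac{p}{1-p}$), the sum $\sum_{k\ge1}\frac{k^2}{(k+s)^2}\binom{k+s}{k}p^k$ collapses to elementary terms. After peeling off the finitely many excluded indices ($k=1,2$ for $J=\{1,2\}$, and $k=2$ for $J_{sym}$), this should give
\[
h_{\{1,2\}}(s,p)=\frac{1}{(s+1)(1-p)}\sum_{i=0}^{s}(1-p)^{i}+\frac{s}{(1-p)^2}\int_0^1\frac{v^{s+1}}{1+Pv}\,dv-\frac{2(s+1)(1-p)^s}{s+2},
\]
in which every term is manifestly elementary. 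For $J_{sym}$ the identical computation runs with the even part of the generating function, i.e.\ with $\pm p$, producing $(1+p)$ in the denominators and hence the $1-p^2$ and $\frac{p^2}{(1-p^2)^2}$ structure seen in the claimed bound.

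Next I would bound each term uniformly in $s$. The geometric combination obeys $\frac{1}{(s+1)(1-p)}\sum_{i=0}^s(1-p)^i\le\frac{1}{1-p}$ and supplies the $O(1)$ part; the correction integral $\frac{s}{(1-p)^2}\int_0^1\frac{v^{s+1}}{1+Pv}\,dv$, estimated through $\frac{1}{1+Pv}\le1$ and refinements thereof, supplies the terms linear in $\frac{p}{1-p}$ (respectively $\frac{p^2}{(1-p^2)^2}$); and the subtracted term only helps. The entries $C_{\{1,2\}}(1/n)$ and $C_{J_{sym}}(1/n)$ of Table~\ref{tab:rounded_estimates} then follow by evaluating $\max_s h_J(1/n,\cdot)$ directly, truncating the defining sum at a finite index and controlling the tail by the geometric bound coming from the ratio $\frac{u_{k+1}}{u_k}=\frac{(k+1)(k+s)^2}{k^2(k+1+s)}\,p\to p$.

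The main obstacle is extracting the near-optimal explicit constants $1.2183$ and $1.6066$ (and their symmetric analogues $0.5893,0.9724,0.1405$). The maximizing $s$ grows like $1/p$, so as $p\to0$ the discrete sum is governed by a continuous profile: writing $s=x/p$ one finds $h_{\{1,2\}}(s,p)\to H(x):=1+\frac{1-e^{-x}}{x}-2e^{-x}$, whose maximum over $x>0$, attained near $x\approx3.2$, equals $1.2183\ldots$; in the symmetric case the analogous profile is $\frac{(1+x)+(x-1)e^{-2x}}{2x}-2e^{-x}$ with supremum $0.5893\ldots$. Thus the leading constants in the bounds are precisely these profile maxima, and the genuine work is to show that for every finite $p$ the discrete maximum exceeds the profile maximum by no more than a quantity linear in $\frac{p}{1-p}$. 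I expect this to require only monotonicity and convexity control of the elementary terms displayed above, rather than any further structural input.
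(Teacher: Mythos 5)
Your plan is, in skeleton, the same as the paper's Appendix~\ref{appA}: your profile $H(x)=1+\frac{1-e^{-x}}{x}-2e^{-x}$ is exactly the paper's $g(x)$ from Lemma~\ref{lem_general_h}, your symmetric profile equals its $\frac{1}{2}\bigl(g(x)+e^{-2x}g(-x)\bigr)$, the leading constants $1.2183$ and $0.5893$ are the maxima of these profiles, and the table is likewise obtained from the exact $h_J(s,p)$ reduced to elementary functions. Your closed form for $h_{\{1,2\}}(s,p)$ is in fact correct: the substitution $u=\frac{v(1-p)}{1-pv}$ turns $\int_0^1 v^s(1-pv)^{-s-1}\,dv$ into $(1-p)^{-s}\int_0^1 u^s(1+Pu)^{-1}\,du$, after which your three terms recombine exactly as stated. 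The one structural difference is that the paper never sums the exact series: its Lemma~\ref{lem_h0_vs_h} first drops the factor $\frac{k+s-1}{k+s}<1$ in (\ref{def_h_J}), producing an integral-free majorant $h^0_J(s,p)$, and all subsequent estimates are run on that elementary expression. Keeping the exact formula buys you nothing (any bound for $h^0_J$ bounds $h_J$) and costs you an extra integral term in every estimate.

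The genuine gap is the step you yourself defer, and the tools you name do not suffice as stated. First, bounding the integral term via $\frac{1}{1+Pv}\leqslant 1$ gives $\frac{s}{(s+2)(1-p)^2}$, whose excess over its limit $1$ is $\frac{p(2-p)}{(1-p)^2}-\frac{2}{(s+2)(1-p)^2}$; near the maximizing $s\approx 3.2/p$ this is about $1.37\,p$, and adding the positive deviations of the other two terms from their profile limits brings the total to roughly $1.7\,p$, overshooting the budget $1.6066\,\frac{p}{1-p}$ --- so the unspecified ``refinements thereof'' are not optional, and you have not exhibited one that closes the gap. Second, the claim that ``the subtracted term only helps'' is wrong in the relevant direction: compared with the profile term $-2e^{-sp}$, the quantity $-\frac{2(s+1)}{s+2}(1-p)^s$ is \emph{larger}, so it contributes a positive error $2e^{-sp}-\frac{2(s+1)}{s+2}(1-p)^s$ that must be bounded above uniformly in $s$. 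Controlling this (and the analogous deviation in your first term) requires quantitative inequalities comparing $(1\pm p)^s$ with $e^{\pm sp}$, such as $0<e^{-sp}-(1-p)^s<\frac{sp^2}{2(1-p)}e^{-sp}$ --- precisely the content of the paper's Lemma~\ref{inequality_elementary} --- and it is from $\max_{x>0}\bigl(1+e^{-x}(x+\frac{1}{2})\bigr)=1+e^{-1/2}$ that the coefficient $1.6066$ actually arises. Until such a uniform-in-$s$ error estimate with explicit constants is carried out (this is what your appeal to ``monotonicity and convexity control'' would have to deliver), neither displayed inequality of the lemma, nor the symmetric coefficients $0.9724$ and $0.1405$, is established.
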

The proof of this lemma consisting of a tedious series of elementary inequalities is provided in the Appendix \ref{appA}.
\begin{thm}
\label{thm_general_sigma} Let
\[
S_n=\sigma_1X_1+\sigma_2X_2+\cdots+\sigma_nX_n
\]
where \(X_j\) are independent, \(\mathbb{E}X_j=0\), \(\mathbb{E}X_j^2=1\)  and \(0< \sigma_j^2<1\) for all \(1\leqslant j \leqslant n\). Assume further that the variances satisfy  \(\sigma_1^2+\sigma_2^2+\cdots+\sigma_n^2=1\). Let us also assume that  \(J\) is a set of integers such that 
\[
\mathbb{E}H_k(X_j)=0
\]
whenever \(k\in J\) for all \(j=1,2,\ldots,n\). 
Let us also define
\[
S_{n;k}=\frac{S_n-\sigma_kX_k}{\sqrt{1-\sigma_k^2}}.
\]
Then 
	\[
\chi^2(S_n,\mathcal{N})\leqslant\sum_{k=1}^{n}\left(\sum _{{j\in \mathbb{Z}^+\setminus J}}{\sigma^{2(j-1)}_k}\right)\chi^2(X_k,\mathcal{N})+\sum_{k=1}^{n}\sigma_k^2C_J(\sigma_k^2)\chi^2(X_k,\mathcal{N}) \chi^2(S_{n;k},\mathcal{N})
\]
for all \(n\geqslant 2\).

\noindent
\textbf{Remark.} Since each \( X_j \) has zero mean and unit variance, it follows that \( \mathbb{E}H_1(X_j) = \mathbb{E}H_2(X_j) = 0 \) for all \( j \). Hence, the integers \( 1 \) and \( 2 \) can always be assumed to belong to the set \( J \).
\end{thm}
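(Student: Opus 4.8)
The plan is to combine Parseval's identity \eqref{parceval_identity_for_sum} with the recurrence of Theorem \ref{stein_recurrent_identity} and then feed the outcome into Lemma \ref{lem_ineq_quad_form}. Starting from
\[
\chi^2(S_n,\mathcal{N})=\sum_{m=1}^\infty\frac{\bigl(\mathbb{E}H_m(S_n)\bigr)^2}{m!},
\]
I would first cast each Hermite moment into the normalized form demanded by the Lemma. Fixing $k$ and setting
\[
a_j^{(k)}=\frac{\mathbb{E}H_j(X_k)}{\sqrt{j!}},\qquad b_l^{(k)}=\frac{\mathbb{E}H_l(S_{n;k})}{\sqrt{l!}},
\]
Parseval gives $\sum_{j\geqslant1}(a_j^{(k)})^2=\chi^2(X_k,\mathcal{N})$ and $\sum_{l\geqslant1}(b_l^{(k)})^2=\chi^2(S_{n;k},\mathcal{N})$, while $b_0^{(k)}=\mathbb{E}H_0(S_{n;k})=1$. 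Moreover $a_j^{(k)}=0$ for every $j\in J$ by the hypothesis on $J$, which is exactly the vanishing condition required by Lemma \ref{lem_ineq_quad_form}.

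The key algebraic step is to divide the recurrence of Theorem \ref{stein_recurrent_identity} by $\sqrt{m!}$ and simplify the binomial factor. Substituting $\mathbb{E}H_{m-j}(S_{n;k})=b_{m-j}^{(k)}\sqrt{(m-j)!}$ and $\mathbb{E}H_j(X_k)=a_j^{(k)}\sqrt{j!}$, and using $\binom{m}{j}\sqrt{(m-j)!\,j!}\big/\sqrt{m!}=\sqrt{\binom{m}{j}}$ together with the identity $\sqrt{\binom{m}{j}}\,(1-\sigma_k^2)^{(m-j)/2}\sigma_k^{j}=\sqrt{B(m,j,\sigma_k^2)}$, one arrives at
\[
\frac{\mathbb{E}H_m(S_n)}{\sqrt{m!}}=\sum_{k=1}^{n}\sigma_k^2\,c_m^{(k)},\qquad c_m^{(k)}=\frac{1}{m\sigma_k^2}\sum_{j=1}^{m}b_{m-j}^{(k)}\,j\,a_j^{(k)}\sqrt{B(m,j,\sigma_k^2)},
\]
where the factor $\tfrac1m$ has been split as $\sigma_k^2\cdot\tfrac1{m\sigma_k^2}$ so that $c_m^{(k)}$ is precisely the quantity bounded by Lemma \ref{lem_ineq_quad_form} with $p=\sigma_k^2$ (its dummy summation index being played by $j$).

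Next I would exploit the normalization $\sum_{k=1}^n\sigma_k^2=1$. Since the weights $\sigma_k^2$ form a probability vector and $x\mapsto x^2$ is convex, Jensen's inequality (equivalently Cauchy--Schwarz) yields, for each $m$,
\[
\left(\sum_{k=1}^{n}\sigma_k^2 c_m^{(k)}\right)^2\leqslant\sum_{k=1}^{n}\sigma_k^2\,\bigl(c_m^{(k)}\bigr)^2.
\]
Summing over $m\geqslant1$, interchanging the order of summation, and applying Lemma \ref{lem_ineq_quad_form} to each inner sum $\sum_{m\geqslant1}(c_m^{(k)})^2$ converts the Lemma's two terms into $\bigl(\sum_{j\in\mathbb{Z}^+\setminus J}\sigma_k^{2(j-2)}\bigr)\chi^2(X_k,\mathcal{N})$ and $C_J(\sigma_k^2)\,\chi^2(X_k,\mathcal{N})\,\chi^2(S_{n;k},\mathcal{N})$. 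Multiplying through by the outer weight $\sigma_k^2$ absorbs one power of $\sigma_k^2$, turning $\sigma_k^{2(j-2)}$ into $\sigma_k^{2(j-1)}$, and summing over $k$ produces exactly the two sums asserted in the statement.

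The main obstacle I anticipate is the bookkeeping in the second paragraph: keeping the powers of $\sigma_k$ and the factorial normalizations aligned so that the $\tfrac1m$ appearing in the recurrence is correctly matched to the $\tfrac1{mp}$ required by the Lemma, with the surplus factor $\sigma_k^2$ carried outside as the convex weight. Once this identification is made, the remainder is a direct application of Jensen's inequality and Lemma \ref{lem_ineq_quad_form}, and no fresh analytic estimate is needed.
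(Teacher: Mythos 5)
Your proposal is correct and follows essentially the same route as the paper: normalize the recurrence of Theorem \ref{stein_recurrent_identity} by $\sqrt{m!}$, split $\tfrac1m=\sigma_k^2\cdot\tfrac1{m\sigma_k^2}$ so that the inner sums match Lemma \ref{lem_ineq_quad_form} with $p=\sigma_k^2$, apply the convexity/Cauchy--Schwarz step with the probability weights $\sigma_k^2$, and then sum over $m$ and invoke the Lemma together with Parseval. Your Jensen inequality is exactly the paper's Cauchy inequality written with the split $\sigma_k\cdot\tfrac1{m\sigma_k}$, so the two arguments coincide step for step.
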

\begin{proof}
Dividing both sides of the recurrence identity in Theorem~\ref{stein_recurrent_identity} by \( \sqrt{m!} \), we can rewrite it in the following form:
\[
\begin{split}
\frac{\mathbb{E}H_{m}(S_n)}{\sqrt{m!}}
&=\frac{1}{m}\sum_{k=1}^{n}\sum ^{m}_{j=1}\frac{\mathbb{E}H_{m-j}\left( S_{n;k}\right)}{\sqrt{(m-j)!}}j\frac {\mathbb{E}H_{j}\left( X_k\right)}{\sqrt{j!}}\sqrt{B(m,j,\sigma_k^2)}.
\end{split}
\] 
We introduce the following notation for simplicity
\[
b_m^{(n)}=\frac{\mathbb{E}H_{m}(S_n)}{\sqrt{m!}},\quad b_m^{(n;k)}=\frac{\mathbb{E}H_{m}(S_{n;k})}{\sqrt{m!}},\quad a_{j}^{(k)}=\frac {\mathbb{E}H_{j}\left( X_k\right)}{\sqrt{j!}}
\]
with these notations, the recurrence relation takes the following form:
\[
\begin{split}
b_m^{(n)}
&=\frac{1}{m}\sum_{k=1}^{n}\sum ^{m}_{j=1}b_{m-j}^{(n;k)}ja_{j}^{(k)}\sqrt{B(m,j,\sigma_k^2)}.
\end{split}
\]
Applying here the Cauchy inequality, we obtain
\[
\begin{split}
\bigl|b_m^{(n)}\bigr|^2
&=\left(\sum_{k=1}^{n}\sigma_k\frac{1}{m\sigma_k}\sum ^{m}_{j=1}b_{m-j}^{(n;k)}ja_{j}^{(k)}\sqrt{B(m,j,\sigma_k^2)}\right)^2
\\
&\leqslant\left(\sum_{k=1}^{n}\sigma_k^2\right)\sum_{k=1}^{n}\left(\frac{1}{m\sigma_k}\sum ^{m}_{j=1}b_{m-j}^{(n;k)}ja_{j}^{(k)}\sqrt{B(m,j,\sigma_k^2)}\right)^2
\\
&\leqslant\sum_{k=1}^{n}\left(\frac{1}{m\sigma_k}\sum ^{m}_{j=1}b_{m-j}^{(n;k)}ja_{j}^{(k)}\sqrt{B(m,j,\sigma_k^2)}\right)^2
\end{split}
\]
here we have used the fact that \(\sigma_1^2+\sigma_2^2+\cdots+\sigma_n^2=1\). Summing both sides of the above inequality over \( m = 1, 2, \ldots \) and applying the estimate from Lemma~\ref{lem_ineq_quad_form}, we obtain:
\[
\begin{split}
\sum_{m=1}^\infty\bigl|b_m^{(n)}\bigr|^2
&\leqslant\sum_{k=1}^{n}\sigma_k^2\sum_{m=1}^\infty\left(\frac{1}{m\sigma_k^2}\sum ^{m}_{j=1}b_{m-j}^{(n;k)}ja_{j}^{(k)}\sqrt{B(m,j,\sigma_k^2)}\right)^2
\\
&\leqslant\sum_{k=1}^{n}\sigma_k^2\left(\left(\sum _{\substack{j\geqslant 1\\j\not\in J}}{\sigma^{2(j-2)}_k}\right)\left(\sum ^{\infty}_{j=1}|a_j^{(k)}|^2\right)+C_J(\sigma_k^2)\left(\sum ^{\infty}_{j=1}|a_j^{(k)}|^2\right)\sum_{s=1 }^\infty \bigl|b_{s}^{(n;k)}\bigr|^2\right).
\end{split}
\]
Hence, noting that \( \chi^2(S_n, \mathcal{N}) = \sum_{m=1}^\infty \bigl|b_m^{(n)}\bigr|^2 \), \( \chi^2(S_{n;k}, \mathcal{N}) = \sum_{m=1}^\infty \bigl|b_m^{(n;k)}\bigr|^2 \), and \( \chi^2(X_k, \mathcal{N}) = \sum_{m=1}^\infty \bigl|a_m^{(k)}\bigr|^2 \), the inequality stated in the theorem follows immediately.
\end{proof}
\subsection{Sums of random variables with equal variances}
By iterating the upper bound provided by Theorem \ref{thm_general_sigma}, we obtain an upper bound for \(\chi^2(S_n, \mathcal{N})\) in the form of an \( n \)-th order polynomial in the variables \(\chi^2(X_j, \mathcal{N})\), whose coefficients depend on \(\sigma_1^2, \sigma_2^2, \dots, \sigma_n^2\). However, since analyzing such polynomials is quite intricate, we will henceforth restrict our attention to the simplest case, where all summands of \( S_n \) have the same variance, equal to \( 1/n \), i.e.,  
\[
\sigma_1 = \sigma_2 = \cdots = \sigma_n = \frac{1}{\sqrt{n}}.
\]
In this case, the theorem takes the following form.
\begin{thm}
\label{thm_equal_variances}
Suppose 
\[
S_n=\frac{X_1+X_2+\cdots+X_n}{\sqrt{n}}
\]
where \(X_j\) are independent, \(\mathbb{E}X_j=0\) and \(\mathbb{E}X_j^2=1\). Let us also define
\[
S_{n;k}=\frac1{\sqrt{n-1}}{\sum_{\substack{1\leqslant j\leqslant n\\j\not=k}}X_j}
\]
then
\[
\chi^2(S_n,\mathcal{N})\leqslant \left(\sum _{\substack{j\in \mathbb{Z}^+\setminus J}}\frac1{n^{j-2}}\right)\frac1n\sum_{k=1}^{n}\chi^2(X_k,\mathcal{N})+\frac{C_J(1/n) }{n} \sum_{k=1}^{n} \chi^2(X_k,\mathcal{N})\chi^2(S_{n;k},\mathcal{N})
\]
where \(J\) is a set of integers such that 
\[
\mathbb{E}H_k(X_j)=0
\]
whenever \(k\in J\) for all \(j=1,2,\ldots,n\).
\end{thm}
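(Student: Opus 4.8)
The plan is to derive this statement as an immediate specialization of Theorem~\ref{thm_general_sigma}, taking $\sigma_1=\sigma_2=\cdots=\sigma_n=1/\sqrt{n}$, so that $\sigma_k^2=1/n$ for every $k$ and the normalization constraint $\sigma_1^2+\cdots+\sigma_n^2=1$ holds automatically. The only genuine prerequisite is to confirm that the quantity $S_{n;k}$ appearing in Theorem~\ref{thm_general_sigma}, namely $S_{n;k}=(S_n-\sigma_kX_k)/\sqrt{1-\sigma_k^2}$, reduces to the expression $\frac{1}{\sqrt{n-1}}\sum_{j\neq k}X_j$ stated here. First I would compute $S_n-\sigma_kX_k=\frac{1}{\sqrt{n}}\sum_{j\neq k}X_j$ together with $\sqrt{1-\sigma_k^2}=\sqrt{(n-1)/n}$, whose ratio is precisely $\frac{1}{\sqrt{n-1}}\sum_{j\neq k}X_j$; thus the two definitions of $S_{n;k}$ coincide and the hypotheses of Theorem~\ref{thm_general_sigma} transfer verbatim, including the assumption that $\mathbb{E}H_k(X_j)=0$ for all $k\in J$.

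With the substitution $\sigma_k^2=1/n$ in hand, the remaining work is purely algebraic simplification of the two sums on the right-hand side of Theorem~\ref{thm_general_sigma}. For the linear term I would observe that the inner sum $\sum_{j\in\mathbb{Z}^+\setminus J}\sigma_k^{2(j-1)}=\sum_{j\in\mathbb{Z}^+\setminus J}n^{-(j-1)}$ no longer depends on $k$, so it factors out of the sum over $k$; writing $n^{-(j-1)}=n^{-(j-2)}\cdot n^{-1}$ then converts the expression into $\left(\sum_{j\in\mathbb{Z}^+\setminus J}n^{-(j-2)}\right)\frac{1}{n}\sum_{k=1}^n\chi^2(X_k,\mathcal{N})$, which matches the first term of the claim. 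For the quadratic term I would substitute $\sigma_k^2\,C_J(\sigma_k^2)=\frac{1}{n}C_J(1/n)$ and pull the common constant factor $C_J(1/n)/n$ outside the sum over $k$, yielding the stated second term.

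I expect no substantial obstacle in this argument: once Theorem~\ref{thm_general_sigma} is granted, the statement is a matter of bookkeeping. The only points demanding care are the index shift in the exponent of $1/n$ (distinguishing $j-1$ from $j-2$) and the consistent reading of $\sigma_k^{2(j-1)}=(\sigma_k^2)^{j-1}$, both of which must be tracked precisely to land on the exact powers $n^{j-2}$ quoted in the theorem. The fact that every $\sigma_k^2$ equals the same value $1/n$ is what permits the factorization and the replacement of the $k$-dependent constants $C_J(\sigma_k^2)$ by the single constant $C_J(1/n)$, and this uniformity is the structural feature that makes the equal-variance case so much cleaner than the general one.
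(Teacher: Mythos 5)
Your proposal is correct and matches the paper exactly: the paper gives no separate proof of Theorem~\ref{thm_equal_variances}, presenting it precisely as the specialization of Theorem~\ref{thm_general_sigma} to $\sigma_1=\cdots=\sigma_n=1/\sqrt{n}$, and your algebraic bookkeeping (the identification of $S_{n;k}$, the exponent shift from $n^{-(j-1)}$ to $n^{-(j-2)}\cdot n^{-1}$, and the factoring out of $C_J(1/n)/n$) is exactly what that specialization requires.
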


\begin{cor}
Under the conditions of the previous theorem 
\begin{equation}
\label{rec_chi_mean_0_var_1}
\chi^2(S_n,\mathcal{N})\leqslant \frac{1}{n(n-1)}\sum_{k=1}^{n}\chi^2(X_k,\mathcal{N})+\frac{C_{\{1,2\}}(1/n)}{n} \sum_{k=1}^{n} \chi^2(X_k,\mathcal{N})\chi^2(S_{n;k},\mathcal{N}).	
\end{equation}
If we additionally  assume that all \(X_j\) are symmetric  (that is the distribution of \(-X_j\) is the same as that of \(X_j\)) then  we get the inequality
\begin{equation}
\label{rec_chi_sym}
\chi^2(S_n,\mathcal{N})\leqslant \frac{1}{n(n^2-1)}\sum_{k=1}^{n}\chi^2(X_k,\mathcal{N})+\frac{C_{J_{sym}}(1/n)}{n} \sum_{k=1}^{n} \chi^2(X_k,\mathcal{N})\chi^2(S_{n;k},\mathcal{N}).
\end{equation}
where  \(J_{sym}=\{2\}\cup \{2m+1|m\geqslant 0\}\).
\end{cor}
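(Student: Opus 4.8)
The plan is to obtain both inequalities as direct specializations of Theorem~\ref{thm_equal_variances}, choosing the set \(J\) appropriately in each case and then evaluating the resulting geometric series \(\sum_{j\in\mathbb{Z}^+\setminus J}n^{-(j-2)}\). The only points that require checking are that each proposed \(J\) genuinely satisfies the hypothesis \(\mathbb{E}H_k(X_j)=0\) for \(k\in J\), and that the closed form of the series produces the stated leading coefficients.

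For the first inequality I would take \(J=\{1,2\}\). This choice is admissible for every \(X_j\) with zero mean and unit variance, since \(H_1(x)=x\) and \(H_2(x)=x^2-1\) give \(\mathbb{E}H_1(X_j)=\mathbb{E}X_j=0\) and \(\mathbb{E}H_2(X_j)=\mathbb{E}X_j^2-1=0\); this is precisely the content of the Remark following Theorem~\ref{thm_general_sigma}. With this \(J\), the complementary index set is \(\mathbb{Z}^+\setminus J=\{3,4,5,\dots\}\), so
\[
\sum_{j\in\mathbb{Z}^+\setminus J}\frac{1}{n^{j-2}}
=\sum_{j=3}^\infty\frac{1}{n^{j-2}}
=\frac{1}{n}+\frac{1}{n^2}+\cdots
=\frac{1}{n-1}.
\]
Substituting this together with \(C_J(1/n)=C_{\{1,2\}}(1/n)\) into Theorem~\ref{thm_equal_variances} turns the leading factor into \(\frac{1}{n-1}\cdot\frac{1}{n}=\frac{1}{n(n-1)}\), which is exactly the inequality~\eqref{rec_chi_mean_0_var_1}.

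For the second inequality, under the additional symmetry assumption I would take \(J_{sym}=\{2\}\cup\{2m+1\mid m\geqslant 0\}\). Admissibility rests on two observations: first, \(\mathbb{E}H_2(X_j)=0\) by unit variance as above; second, for odd \(k\) the Hermite polynomial is an odd function, since the generating identity~\eqref{gen_f_for_hermitian} gives \(H_k(-x)=(-1)^kH_k(x)=-H_k(x)\), so symmetry of \(X_j\) yields \(\mathbb{E}H_k(X_j)=\mathbb{E}H_k(-X_j)=-\mathbb{E}H_k(X_j)\) and hence \(\mathbb{E}H_k(X_j)=0\). The complementary set is then the even integers that are at least \(4\), namely \(\mathbb{Z}^+\setminus J_{sym}=\{4,6,8,\dots\}\), whence
\[
\sum_{j\in\mathbb{Z}^+\setminus J_{sym}}\frac{1}{n^{j-2}}
=\sum_{m\geqslant 2}\frac{1}{n^{2m-2}}
=\frac{1}{n^2}+\frac{1}{n^4}+\cdots
=\frac{1}{n^2-1}.
\]
Feeding this and \(C_J(1/n)=C_{J_{sym}}(1/n)\) into Theorem~\ref{thm_equal_variances} produces the leading factor \(\frac{1}{n^2-1}\cdot\frac{1}{n}=\frac{1}{n(n^2-1)}\), which is~\eqref{rec_chi_sym}. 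No genuine obstacle arises at this stage: the whole argument reduces to a parity observation for the Hermite polynomials together with the summation of two elementary geometric series, so the real difficulty is concentrated entirely upstream, in Theorem~\ref{thm_equal_variances} and in the bounds for \(C_J(1/n)\) supplied by Lemma~\ref{lem_C_estimates}.
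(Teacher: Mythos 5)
Your proposal is correct and follows essentially the same route as the paper: specialize Theorem~\ref{thm_equal_variances} to \(J=\{1,2\}\) and \(J=J_{sym}\), verify admissibility of each \(J\), and sum the two geometric series to get the coefficients \(\frac{1}{n(n-1)}\) and \(\frac{1}{n(n^2-1)}\). The only cosmetic difference is that you justify \(\mathbb{E}H_{2m+1}(X_j)=0\) via the parity \(H_k(-x)=(-1)^kH_k(x)\) of Hermite polynomials, whereas the paper argues through the vanishing of odd moments; these are equivalent observations.
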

\begin{proof}Note that requirement of the  Theorem \ref{thm_equal_variances}  that all variables \(X_j\) have zero mean and variances equal to one implies that \(\mathbb{E}H_{1}(X_j)=\mathbb{E}H_{2}(X_j)=0\). This means that  we can apply this theorem with  \(J=\{1,2\}\). The sum over \(\mathbb{Z}^+\setminus J\) on the right side of the resulting inequality can be easily computed
\[
\sum_{j\in \mathbb{Z}^+\setminus\{1,2\}}\frac{1}{n^{j-2}}=\sum_{m=1}^\infty\frac{1}{n^m}=\frac{1}{n-1}.
\]
In the case when all \(X_j\) are symmetric, all odd moments of these variables will be equal to zero, and as a consequence all expectations of Hermite polynomials of odd order will also be equal \(\mathbb{E}H_{2m+1}(X_j)=0\) for all \(m\geqslant 0\) and \(1\leqslant j \leqslant n\). This means that we can apply the inequality of the Theorem \ref{thm_equal_variances} with  \(J=J_{sym}\). In this case
\[
\sum_{j\in \mathbb{Z}^+\setminus J_{sym}}\frac{1}{n^{j-2}}=\frac{1}{n^2}+\frac{1}{n^4}+\frac{1}{n^6}\cdots=\frac{1}{n^2-1}.
\]
The corollary is proved.
	\end{proof}
We will use a special case of Maclaurin's inequality, stated in the following theorem (see \cite{hardy_littlewood_polya_1952}, p. 52, or \cite{steele_2004}, p. 179).
\begin{thm}[Maclaurin's inequality]For any sequence of real non-negative numbers \(a_1,a_2,\ldots, a_n\) holds the inequality
\begin{equation}
\label{Maclaurins_inequality}
\begin{split}
\frac{1}{n_{(k)}} \sum_{ \substack{1\leqslant j_1,j_2,\ldots,j_k\leqslant n\\
j_1\not=j_2\not=\cdots\not=j_k} }a_{j_1} a_{j_2}\cdots a_{j_k}\leqslant
\left(\dfrac{1}{n}\sum ^{n}_{m=1}a_{m}\right)^k
\end{split}	
\end{equation}
where we have used the notation \(n_{(k)}=n(n-1)(n-2)\cdots(n-k+1)\).
\end{thm}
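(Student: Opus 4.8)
The plan is to reduce the stated inequality to the classical comparison between elementary symmetric means and then to establish that comparison through Newton's inequalities. First I would rewrite both sides in terms of the elementary symmetric polynomials $e_k(a)=\sum_{1\leqslant i_1<\cdots<i_k\leqslant n}a_{i_1}\cdots a_{i_k}$. The constraint $j_1\neq j_2\neq\cdots\neq j_k$ is read as requiring all indices to be pairwise distinct, so each unordered $k$-subset is counted exactly $k!$ times among the ordered tuples; hence the numerator on the left equals $k!\,e_k(a)$. Since $n_{(k)}=k!\binom{n}{k}$, the left-hand side is precisely the $k$-th symmetric mean $S_k:=e_k(a)/\binom{n}{k}$, while the right-hand side is $S_1^k$ with $S_1=\frac1n\sum_m a_m$. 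Thus the whole statement collapses to the single inequality $S_k\leqslant S_1^k$, valid for nonnegative $a_i$; the degenerate case $S_1=0$ (all $a_i=0$) is trivial, so one may assume $S_1>0$.

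Second, I would establish Newton's inequalities $S_k^2\geqslant S_{k-1}S_{k+1}$ for $1\leqslant k\leqslant n-1$. The key is that the polynomial $P(x)=\prod_{i=1}^n (x+a_i)=\sum_{k=0}^n\binom{n}{k}S_k\,x^{n-k}$ has only real (nonpositive) roots because the $a_i$ are real. Differentiating $P$ repeatedly and applying Rolle's theorem preserves real-rootedness; combined with the coefficient-reversal operation $Q(x)\mapsto x^{\deg Q}Q(1/x)$, which also preserves real-rootedness, one can reduce to a real-rooted quadratic proportional to $S_{k-1}x^2+2S_k x+S_{k+1}$. Its discriminant must be nonnegative, which is exactly $S_k^2\geqslant S_{k-1}S_{k+1}$.

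Finally, I would deduce $S_k\leqslant S_1^k$ from log-concavity. Newton's inequalities say that $S_j/S_{j-1}$ is non-increasing in $j$ (wherever the terms are positive), since dividing $S_j^2\geqslant S_{j-1}S_{j+1}$ by $S_jS_{j-1}$ gives $S_{j+1}/S_j\leqslant S_j/S_{j-1}$; because $S_0=1$, the first ratio is $S_1/S_0=S_1$, so every ratio $S_j/S_{j-1}$ is at most $S_1$. Telescoping, $S_k=\prod_{j=1}^k (S_j/S_{j-1})\leqslant S_1^k$, which is the desired bound. One must handle the boundary case where some $S_j$ vanishes separately, but positivity of the $a_i$ makes this routine.

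The main obstacle is the second step: turning the geometric statement that derivatives and reversals of a real-rooted polynomial stay real-rooted into a fully rigorous argument and tracking the coefficients down to the governing quadratic. An alternative that avoids polynomial manipulation entirely is the probabilistic reading: the left-hand side is $\mathbb{E}\!\left[\prod_{r=1}^k X_r\right]$, where $(X_1,\dots,X_k)$ is sampled without replacement from $\{a_1,\dots,a_n\}$, whereas $S_1^k=\prod_r \mathbb{E}[Y_r]$ corresponds to sampling with replacement; the inequality then follows from the negative association of sampling without replacement, though invoking that property shifts rather than removes the technical burden.
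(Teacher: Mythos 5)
Your proof is correct: reading the index condition as pairwise distinctness, the left-hand side is exactly the elementary symmetric mean $S_k=e_k(a)/\binom{n}{k}$, and your chain — Newton's inequalities $S_k^2\geqslant S_{k-1}S_{k+1}$ via real-rootedness of $\prod_i(x+a_i)$ under differentiation and coefficient reversal, followed by the telescoping $S_k=\prod_{j=1}^k(S_j/S_{j-1})\leqslant S_1^k$ with the degenerate cases (some $S_j=0$, forced by fewer than $j$ positive entries, giving $S_k=0$) handled separately — is sound. The paper offers no proof of this statement at all, citing Hardy--Littlewood--P\'olya (p.~52) and Steele (p.~179) instead, and your argument is essentially the classical one presented in those references, so there is no discrepancy to reconcile.
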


\begin{prop}
\label{recurrencies_main}
Suppose we have a function \( \mu: 2^{\Omega_n} \to [0, +\infty) \) defined on all subsets of a finite set \( \Omega_n = \{1,2,3,\dots,n\} \) that satisfies the inequality  
\[
\mu(A) \leqslant \frac{1}{|A|} \sum_{k \in A} \mu(\{k\}) + \frac{C_{|A|}}{|A|} \sum_{k \in A} \mu(\{k\}) \mu(A \setminus \{k\})
\]
for all subsets \( A \subset \Omega_n \) with at least two elements \(|A| \geqslant 2\), where \( C_2, C_3, \dots, C_n \) is a sequence of positive numbers.
Then for all sets \(A\subset \Omega_n\) holds the inequality
\[
\begin{split}
\mu(|A|)&\leqslant \frac{1}{|A|}\sum_{k\in A }\mu(\{k\})+\sum_{k=2}^{|A|}{C_{|A|} C_{|A|-1}\cdots C_{|A|-k+2}}\left(\frac{1}{|A|}\sum_{k\in A}\mu(\{k\})\right)^k.
\end{split}
\]

\end{prop}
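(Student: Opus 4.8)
The plan is to prove the inequality by induction on $|A|$, but to carry through the induction a stronger, ``unexpanded'' bound in which products of the values $\mu(\{k\})$ appear as sums over tuples of \emph{distinct} indices; Maclaurin's inequality (\ref{Maclaurins_inequality}) is then invoked only once, at the very end, to pass from these distinct-index sums to powers of the average $\bar\mu_A=\frac{1}{|A|}\sum_{k\in A}\mu(\{k\})$. Concretely, for a set $A$ with $|A|=m$ I would introduce
\[
G(A)=\sum_{j=0}^{m-1}\frac{C_m C_{m-1}\cdots C_{m-j+1}}{m_{(j+1)}}\sum_{\substack{k_0,k_1,\ldots,k_j\in A\\ \text{all distinct}}}\mu(\{k_0\})\mu(\{k_1\})\cdots\mu(\{k_j\}),
\]
with the convention that the empty product of $C$'s (the case $j=0$) equals $1$, so that the $j=0$ term is exactly $\bar\mu_A$.

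First I would record the single algebraic fact that drives the induction: $G$ obeys the \emph{same} recursion as the hypothesis on $\mu$, but now as an identity,
\[
G(A)=\frac{1}{m}\sum_{k\in A}\mu(\{k\})+\frac{C_m}{m}\sum_{k\in A}\mu(\{k\})\,G(A\setminus\{k\}),\qquad m=|A|\geqslant 2,
\]
together with $G(\{k\})=\mu(\{k\})$. This is a routine bookkeeping check: multiplying $\frac{C_m}{m}\mu(\{k\})$ into the degree-$j$ layer of $G(A\setminus\{k\})$ and summing over $k\in A$ merges the index $k$ with the $j+1$ distinct indices of that layer into a sum over $j+2$ distinct indices of $A$, while the prefactors combine correctly because $m\cdot(m-1)_{(j+1)}=m_{(j+2)}$ and $C_m\cdot(C_{m-1}\cdots C_{m-j})=C_m\cdots C_{m-j}$; hence the degree-$j$ layer of $G(A\setminus\{k\})$ produces precisely the degree-$(j+1)$ layer of $G(A)$.

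With this identity in hand the induction is immediate. The base case $|A|=1$ is the equality $\mu(\{k\})=G(\{k\})$. For $|A|=m\geqslant 2$ I would apply the given recursive inequality, then insert the inductive bounds $\mu(A\setminus\{k\})\leqslant G(A\setminus\{k\})$ (valid since $|A\setminus\{k\}|=m-1$) under the nonnegative weights $\frac{C_m}{m}\mu(\{k\})\geqslant 0$, and finally invoke the displayed identity for $G$ to conclude $\mu(A)\leqslant G(A)$.

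It then remains to bound $G(A)$ by the polynomial in the statement. For each fixed $j$, Maclaurin's inequality (\ref{Maclaurins_inequality}), applied to the nonnegative numbers $\mu(\{k\})$, $k\in A$, with $n=m$ and tuple length $j+1$, gives
\[
\frac{1}{m_{(j+1)}}\sum_{\substack{k_0,\ldots,k_j\in A\\ \text{distinct}}}\mu(\{k_0\})\cdots\mu(\{k_j\})\leqslant\bar\mu_A^{\,j+1}.
\]
Multiplying by $C_m\cdots C_{m-j+1}$, summing over $j=0,\ldots,m-1$, and reindexing by $k=j+1$ turns $G(A)$ into $\bar\mu_A+\sum_{k=2}^{m}C_m C_{m-1}\cdots C_{m-k+2}\,\bar\mu_A^{\,k}$, the asserted bound. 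The one genuinely instructive point — and the reason the argument is organized this way — is that one must resist applying Maclaurin at each recursion step: doing so would require the single-step inequality $\frac{1}{m}\sum_{k}\mu(\{k\})\bigl(\tfrac{\sigma-\mu(\{k\})}{m-1}\bigr)^{l}\leqslant\bar\mu_A^{\,l+1}$ (with $\sigma=m\bar\mu_A$), which, although true, does not follow from a single application of Maclaurin because $x\mapsto x(1-x)^l$ fails to be concave on $[0,1]$. Unfolding the recursion completely first, so that all products are already sums over distinct indices, sidesteps this difficulty and lets one direct application of Maclaurin finish the proof. I therefore expect the main obstacle to be not any sharp estimate but the careful combinatorial accounting behind the recursion identity for $G$.
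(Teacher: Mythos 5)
Your proof is correct and is essentially the paper's own argument: your \(G(A)\) is exactly the right-hand side of the paper's intermediate inequality (the induction hypothesis with the symmetric distinct-index sums kept unexpanded), and like the paper you invoke Maclaurin's inequality only once at the end to pass from those symmetric means to powers of the average. The only difference is presentational — the paper asserts the inductive step "can easily" be verified, while you carry out that bookkeeping explicitly via the recursion identity for \(G\), and your closing remark about why Maclaurin must not be applied at each step accurately explains why the proof is organized this way.
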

\begin{proof} Applying induction on the size of set \(A\) starting with \(|A|=2\)
we can easily prove that
\[
\begin{split}
\mu(A)&\leqslant \frac{1}{|A|}\sum_{k\in A}\mu(\{k\})
+\sum_{k=2}^{|A|}{C_{|A|} C_{|A|-1}\cdots C_{|A|-k+2}}\left(\frac{1}{|A|_{(k)}} \sum_{ \substack{j_1\in A,j_2\in A,\ldots,j_k\in A \\
j_1\not=j_2\not=\cdots\not=j_k} }\mu(\{j_1\})\mu(\{j_2\}) \cdots \mu(\{j_k\})\right).
\end{split}
\]
Applying Maclaurin's inequality~\eqref{Maclaurins_inequality} to evaluate the symmetric means on the right-hand side of this identity completes the proof of the proposition.
\end{proof}
\begin{thm}
\label{main_theorem}
Suppose \(X_1,X_2,\ldots,X_n\) are independent random variables such that \(\mathbb{E}X_j=0\) and \(\mathbb{E}X_j^2=1\)  for all \(j=1,2,\ldots,n\). Then holds the inequality
\begin{equation}
\label{general_inequality}
\begin{split}
\chi^2(S_n,\mathcal{N})&\leqslant \frac1{n-1}\left(\frac{1}{n}\sum_{j=1}^{n}\chi^2(X_{j},\mathcal{N})\right)\left(1+
\sum_{k=2}^{n}{D_n D_{n-1}\cdots D_{n-k+2}} \left(\frac{1}{n}\sum_{j=1}^{n}\chi^2(X_{j},\mathcal{N})\right)^{k-1}\right)
\end{split}		
	\end{equation}
where the sequence \(D_2,D_3,\ldots\) is a sequence of positive absolute constants  such that
\[
\lim_{n\to \infty}D_n=\lim_{p\to 0}C_{\{1,2\}}(p)< 1.2183.
\]
If  in addition to previous conditions we know that all \(X_1,X_2,\ldots,X_n\) are symmetric random variables (that is the distribution of \(-X_j\) is the same as that of \(X_j\)), then
\begin{equation}
\label{general_inequality_symmetric}
\begin{split}
\chi^2(S_n,\mathcal{N})&\leqslant \frac1{n^2-1}\left(\frac{1}{n}\sum_{j=1}^{n}\chi^2(X_{j_1},\mathcal{N})\right)\left(1+
\sum_{k=2}^{n}{L_n L_{n-1}\cdots L_{n-k+2}} \left(\frac{1}{n}\sum_{j=1}^{n}\chi^2(X_{j_1},\mathcal{N})\right)^{k-1}\right)
\end{split}	
\end{equation}
where the sequence \(L_2,L_3,\ldots\) is a sequence of positive absolute constants  such that
\[
\lim_{n\to \infty}L_n=\lim_{p\to 0}C_{J_{sym}}(p)< 0.5893.
\]
\end{thm}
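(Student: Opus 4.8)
The plan is to read the recurrences (\ref{rec_chi_mean_0_var_1}) and (\ref{rec_chi_sym}) as instances of the abstract recursion solved in Proposition~\ref{recurrencies_main}, after rescaling away the extra factor $\frac1{|A|-1}$ (respectively $\frac1{|A|^2-1}$) carried by their linear terms. First I would recast the recurrence as a set function. For a nonempty $A\subset\Omega_n=\{1,\dots,n\}$ with $|A|=m$ put $S_A=\frac1{\sqrt m}\sum_{j\in A}X_j$ and $\mu(A)=\chi^2(S_A,\mathcal N)$, so that $\mu(\{k\})=\chi^2(X_k,\mathcal N)$. The family $\{X_j:j\in A\}$ again consists of independent mean-zero unit-variance variables, so Theorem~\ref{thm_equal_variances} (with $n$ replaced by $m$) applies to it, and since $S_{A;k}=S_{A\setminus\{k\}}$ it yields, for every $A$ with $m\geqslant2$,
\[
\mu(A)\leqslant\frac1{m(m-1)}\sum_{k\in A}\mu(\{k\})+\frac{C_{\{1,2\}}(1/m)}{m}\sum_{k\in A}\mu(\{k\})\,\mu(A\setminus\{k\}).
\]
This is exactly the recursion of Proposition~\ref{recurrencies_main} except for the weight $\frac1{m(m-1)}$ on the first sum in place of $\frac1m$.

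To remove the discrepancy I would rescale. Put $\tilde\mu(\{k\})=\mu(\{k\})$ and $\tilde\mu(A)=(|A|-1)\mu(A)$ for $|A|\geqslant2$. Writing $\mu(A)=\tilde\mu(A)/(m-1)$ and, for $m\geqslant3$, $\mu(A\setminus\{k\})=\tilde\mu(A\setminus\{k\})/(m-2)$, and then multiplying through by $m-1$, converts the displayed inequality into
\[
\tilde\mu(A)\leqslant\frac1m\sum_{k\in A}\tilde\mu(\{k\})+\frac{D_m}{m}\sum_{k\in A}\tilde\mu(\{k\})\,\tilde\mu(A\setminus\{k\}),\qquad D_m=\frac{m-1}{m-2}\,C_{\{1,2\}}(1/m)\quad(m\geqslant3),
\]
while for $m=2$ the set $A\setminus\{k\}$ is a singleton and is left unscaled, producing the same inequality with $D_2=C_{\{1,2\}}(1/2)$. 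Thus $\tilde\mu$ satisfies the hypothesis of Proposition~\ref{recurrencies_main} with the positive constants $D_2,D_3,\dots$. Applying the Proposition at $A=\Omega_n$, inserting $\tilde\mu(\Omega_n)=(n-1)\chi^2(S_n,\mathcal N)$ and $\frac1n\sum_k\tilde\mu(\{k\})=\frac1n\sum_k\chi^2(X_k,\mathcal N)$, and dividing by $n-1$, gives (\ref{general_inequality}) verbatim. The symmetric case is identical starting from (\ref{rec_chi_sym}): one takes $\tilde\mu(A)=(|A|^2-1)\mu(A)$ and is led to $L_m=\frac{(m-1)(m+1)}{m(m-2)}C_{J_{sym}}(1/m)$ for $m\geqslant3$ and $L_2=3\,C_{J_{sym}}(1/2)$, whence (\ref{general_inequality_symmetric}) follows after dividing by $n^2-1$.

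Finally I would check the asserted limits. Each $D_m,L_m$ is a positive absolute constant since the $C_J(1/m)$ are; moreover $\frac{m-1}{m-2}\to1$ and $\frac{(m-1)(m+1)}{m(m-2)}\to1$, while along $p=1/m\to0$ the quantities $C_{\{1,2\}}(1/m)$ and $C_{J_{sym}}(1/m)$ tend to $\lim_{p\to0}C_{\{1,2\}}(p)$ and $\lim_{p\to0}C_{J_{sym}}(p)$. Hence $\lim_n D_n=\lim_{p\to0}C_{\{1,2\}}(p)$ and $\lim_n L_n=\lim_{p\to0}C_{J_{sym}}(p)$, and the estimates of Lemma~\ref{lem_C_estimates}, in their unrounded form, place these strictly below $1.2183$ and $0.5893$. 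I expect the only genuinely delicate bookkeeping to be the rescaling step, and within it the boundary value $m=2$, where $A\setminus\{k\}$ degenerates to a singleton and must not be rescaled. This is precisely where $D_2=C_{\{1,2\}}(1/2)$ and $L_2=3\,C_{J_{sym}}(1/2)$ are produced, and it is what makes the telescoping product $D_nD_{n-1}\cdots D_{n-k+2}$ in the Proposition terminate correctly at $D_2$ rather than at a degenerate factor.
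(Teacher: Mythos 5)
Your proposal is correct and follows essentially the same route as the paper: the paper's own proof likewise multiplies the recurrences (\ref{rec_chi_mean_0_var_1}) and (\ref{rec_chi_sym}) through by \(|A|-1\) (resp.\ \(|A|^2-1\)), packages the result as the set functions \(\nu\) and \(\omega\) (your \(\tilde\mu\)), and applies Proposition~\ref{recurrencies_main} with exactly your constants \(D_m=\frac{m-1}{m-2}C_{\{1,2\}}(1/m)\), \(D_2=C_{\{1,2\}}(1/2)\), \(L_m=\frac{m^2-1}{(m-1)^2-1}C_{J_{sym}}(1/m)\), \(L_2=3C_{J_{sym}}(1/2)\). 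Your handling of the boundary case \(|A|=2\) and of the limits of \(D_n,L_n\) matches the paper's treatment, so there is nothing to correct.
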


\begin{proof}
Multiplying both sides of inequality (\ref{rec_chi_mean_0_var_1}) by \(n-1\) we get
	\[
	\chi^2(S_n,\mathcal{N})(n-1)\leqslant \frac{1}{n}\sum_{k=1}^{n}\chi^2(X_k,\mathcal{N})+\frac{C_{\{1,2\}}(1/n) \frac{n-1}{n-2}}{n} \sum_{k=1}^{n} \chi^2(X_k,\mathcal{N})(n-2)\chi^2(S_{n;k},\mathcal{N})	\]
	for all \(n\geqslant 3\). For \(n=2\) we have
\[
	\chi^2(S_2,\mathcal{N})\leqslant \frac{1}{2}\sum_{k=1}^{2}\chi^2(X_k,\mathcal{N})+\frac{C_{\{1,2\}}(1/2) }{2} \sum_{k=1}^{2} \chi^2(X_k,\mathcal{N})\chi^2(S_{2;k},\mathcal{N})	\]	
Let us define
\[
D_n=\begin{cases}
	C_{\{1,2\}}(1/n)\frac{n-1}{n-2}&\text{if    } n\geqslant 3
	\\
	C_{\{1,2\}}(1/n)&\text{if }n=2
\end{cases}
\]
and
\[
\nu(A) =
\begin{cases}
    (|A| - 1) \, \chi^2\left(\frac{1}{\sqrt{|A|}} \sum_{j \in A} X_j, \mathcal{N} \right), & \text{if } |A| \geqslant 2, \\
    \chi^2\left(X_j, \mathcal{N} \right), & \text{if } |A| = 1 \text{ with } A = \{j\}.
\end{cases}
\]
With these notations the inequality (\ref{rec_chi_mean_0_var_1}) will take form \[
\nu(A)\leqslant\frac{1}{|A|}\sum_{k\in A}\nu(\{k\})+\frac{C_{|A|}}{|A|}\sum_{k\in A}\nu(\{k\})\nu(A\setminus\{k\}).
\]
Application of Propositon (\ref{recurrencies_main})  with \(\mu=\nu\), \(C_n=D_n\) and \(A=\{1,2,\ldots,n\}\) immediately leads to the estimate (\ref{general_inequality}) of the theorem.

For the symmetric case  let us define
\begin{equation}
\label{def_L}
L_n=\begin{cases}
	C_{J_{sym}}(1/n)\frac{n^2-1}{(n-1)^2-1}&\text{if    } n\geqslant 3
	\\
	C_{J_{sym}}(1/n)(n^2-1)&\text{if }n=2
\end{cases}
\end{equation}
and
\[
\omega(A) =
\begin{cases}
    (|A|^2 - 1) \, \chi^2\left(\frac{1}{\sqrt{|A|}} \sum_{j \in A} X_j, \mathcal{N} \right), & \text{if } |A| \geqslant 2, \\
    \chi^2\left(X_j, \mathcal{N} \right), & \text{if } |A| = 1 \text{ with } A = \{j\}.
\end{cases}
\]
With these notations the inequality (\ref{rec_chi_sym}) will take form
\[
\omega(A)\leqslant\frac{1}{|A|}\sum_{k\in A}\omega(\{k\})+\frac{L_{|A|}}{|A|}\sum_{k\in A}\omega(\{k\})\omega(A\setminus\{k\}).
\]
Applying again Proposition (\ref{recurrencies_main}) with \(\mu=\omega\), \(C_n=L_n\) and \(A=\{1,2,\ldots,n\}\) we obtain the estimate (\ref{general_inequality_symmetric}) of the theorem.
\end{proof}
\begin{exm}
	
Let us consider the case of a sum of \(n\) identically distributed independent random variables, each of which has a uniform distribution on a finite interval
 with zero mean and variance equal to one. The density of such random variables is  
\[
p(x) =
\begin{cases}
    \frac{1}{2\sqrt{3}}, & \text{if } |x| \leqslant \sqrt{3}, \\
    0, & \text{otherwise}.
\end{cases}
\]
The \(\chi^2\) distance from normal distribution of a random variable \(Y\) with such a density is
\[
\chi^2(X,\mathcal{N})=\int ^{+\infty }_{-\infty }\dfrac{p^2\left( x\right) }{\varphi \left( x\right) }\, dx-1=\int ^{\sqrt{3} }_{-\sqrt{3} }\dfrac{\left(\frac{1}{2\sqrt{3}}\right)^2 }{\frac{e^{-x^2/2}}{\sqrt{2\pi}}}\, dx-1
=0.3285\ldots
\]
Thus in the case when all random variables \(X_j\) are uniformly distributed on the interval \([-\sqrt{3},\sqrt{3}]\) the inequality (\ref{general_inequality_symmetric}) will take form
\begin{equation}
\label{ineq_uniform_dist}
\begin{split}
\chi^2(S_n,\mathcal{N})&\leqslant \frac{\chi^2(X_{1},\mathcal{N})}{n^2-1}\left(1+
\sum_{k=2}^{n}{L_n L_{n-1}\cdots L_{n-k+2}} \bigl(\chi^2(X_{1},\mathcal{N})\bigr)^{k-1}\right)
\end{split}		
\end{equation}

Note that the estimates of Lemma \ref{lem_C_estimates} together with the definition (\ref{def_L}) of \(L_n\) imply that inequality
\(
L_n< 2.18
\)
will be true for all \(n\geqslant 3\) and \(L_2<3.2\).
Thus the upper bound for the product
\[
L_n L_{n-1}\cdots L_{n-k+2}\leqslant \frac{3.2}{2.18}2.18^{k-1}
\]
will be true for all \(n\geqslant k\geqslant 2\).  Applying this estimate together with  \(\chi^2(X_1,\mathcal{N})<0.33 \) to evaluate the  terms under the summation sign inside the inequality (\ref{ineq_uniform_dist}) we get the upper bound
	\[
\begin{split}
\chi^2(S_n,\mathcal{N})
&<\frac{1.6}{n^2-1},
\end{split}
	\]	
which is valid for all \(n\geqslant 2\).
\end{exm}
\begin{proof}[Proof of Theorem \ref{main_corollary}]
According to the conditions of the theorem  the inequality
\[
D_j\leqslant 1.218
\]
holds for all  \(j\) with exception of a finite number of indices. This means that  the product \(D_n D_{n-1}\cdots D_{n-k+2}\) can only contain  a finite   (and bounded by an absolute constant) number of terms  larger  than \(1.218\). Therefore
\[
D_n D_{n-1}\cdots D_{n-k+2}\leqslant D(1.218)^{k-1}
\]
for all \(2\leqslant k\leqslant n\) where 
\[
D=\prod_{\substack{j\geqslant 2\\
D_j>1.218}}\frac{D_j}{1.218}.
\]
Hence 
\[
\begin{split}
\sum_{k=2}^{n}{D_n D_{n-1}\cdots D_{n-k+2}} \left(\frac{1}{n}\sum_{j=1}^{n}\chi^2(X_{j},\mathcal{N})\right)^{k-1}
&\leqslant D\left(\frac{1}{n}\sum_{j=1}^{n}\chi^2(X_{j},\mathcal{N})\right)\sum_{k=2}^{n} \left(1.218*0.82\right)^{k-2}
\\
&\leqslant D\left(\frac{1}{n}\sum_{j=1}^{n}\chi^2(X_{j},\mathcal{N})\right)\sum_{k=2}^{\infty} \left(0.99876\right)^{k-2}.
\end{split}
\]
Hence immediately follows the first inequality of the theorem. The proof of the  inequality for symmetric random variables is identical.
\end{proof}
\subsection{Relation between subgausian condition and  \(\chi^2\)-distance}

\begin{thm}
\label{thm_subgausian_general}
	Suppose \(Y\) is a random variable with a finite \(\chi^2(Y,\mathcal{N})\) and \(J\) is a set of positive integers  such that 
	\[
	\mathbb{E}H_n(Y)=0
	\]
	for all \(n\in J\). 
	If
	\[
	\chi^2(Y,\mathcal{N})<\inf_{x> 0}\frac{(e^{x/2}-1)^2}{e^{x}-1-\sum_{\substack{n\in J}}\frac{x^{n}}{n!}}
	\]
	then the condition 
\[
\mathbb{E}e^{tY}<e^{t^2}
\]
holds for all \(t\not=0 \). 

\noindent
\textbf{Remark.} Note that for the infimum to be non-zero one necessarily needs that \(1\in J\), or in other words \(\mathbb{E}H_1(Y)=\mathbb{E}Y=0\).
\end{thm}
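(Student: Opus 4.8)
The plan is to expand the moment generating function $\mathbb{E}e^{tY}$ in Hermite polynomials and reduce the desired bound to a single scalar inequality that is exactly the hypothesis. Writing $e^{tY}=e^{t^2/2}\,e^{tY-t^2/2}$ and invoking the generating function (\ref{gen_f_for_hermitian}), I would first establish the identity
\[
\mathbb{E}e^{tY}=e^{t^2/2}\sum_{n=0}^{\infty}\frac{\mathbb{E}H_n(Y)}{n!}\,t^n.
\]
Rather than justifying a naive interchange of expectation and summation (which is delicate, since finiteness of $\chi^2(Y,\mathcal{N})$ only controls $\mathbb{E}H_n(Y)$ in a square-summable sense, not $\mathbb{E}|H_n(Y)|$), I would derive it through the $L^2(\varphi)$ inner product. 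Let $f(x)=p(x)/\varphi(x)$, where $p$ is the density of $Y$, and let $g_t(x)=e^{tx-t^2/2}$. A direct Gaussian integral gives $\int g_t(x)^2\varphi(x)\,dx=e^{t^2}<\infty$, so $g_t\in L^2(\varphi)$, and its Hermite coefficient of order $n$ equals $t^n/n!$ because $\int g_t(x)H_n(x)\varphi(x)\,dx=t^n$. Pairing $f$ with $g_t$ and using the inner-product (polarized) form of Parseval's identity (\ref{parseval_general}), together with $\int p(x)H_n(x)\,dx=\mathbb{E}H_n(Y)$, yields $\int f g_t\,\varphi=\sum_n\frac{\mathbb{E}H_n(Y)}{n!}t^n$, while the left-hand side equals $e^{-t^2/2}\mathbb{E}e^{tY}$; this gives the displayed identity.

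Since $\mathbb{E}H_0(Y)=1$, the subgaussian condition $\mathbb{E}e^{tY}<e^{t^2}$ is, after dividing by $e^{t^2/2}$, equivalent to
\[
\sum_{n\geqslant 1}\frac{\mathbb{E}H_n(Y)}{n!}\,t^n<e^{t^2/2}-1\qquad(t\neq 0).
\]
To bound the left-hand side I would use $\mathbb{E}H_n(Y)=0$ for $n\in J$ to restrict the sum to indices $n\notin J$ and then apply the Cauchy--Schwarz inequality in the form
\[
\left|\sum_{\substack{n\geqslant 1\\ n\notin J}}\frac{\mathbb{E}H_n(Y)}{n!}\,t^n\right|
\leqslant\left(\sum_{\substack{n\geqslant 1\\ n\notin J}}\frac{(\mathbb{E}H_n(Y))^2}{n!}\right)^{1/2}\left(\sum_{\substack{n\geqslant 1\\ n\notin J}}\frac{t^{2n}}{n!}\right)^{1/2}.
\]
The first factor is at most $\sqrt{\chi^2(Y,\mathcal{N})}$ by the Hermite expansion of the $\chi^2$ distance, and the second factor equals $\bigl(e^{t^2}-1-\sum_{n\in J}t^{2n}/n!\bigr)^{1/2}$.

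Setting $x=t^2>0$, the hypothesis rearranges to
\[
\chi^2(Y,\mathcal{N})\Bigl(e^{t^2}-1-\sum_{n\in J}\tfrac{t^{2n}}{n!}\Bigr)<(e^{t^2/2}-1)^2,
\]
so the Cauchy--Schwarz bound is strictly smaller than $e^{t^2/2}-1$; chaining this with the previous display gives the required strict inequality and hence $\mathbb{E}e^{tY}<e^{t^2}$ for every $t\neq 0$. The main obstacle is the rigorous justification of the Hermite expansion of $\mathbb{E}e^{tY}$ in the first step — everything afterward is a single application of Cauchy--Schwarz followed by the elementary rearrangement of the hypothesis — so I would argue it through $L^2(\varphi)$ membership of $g_t$ and the orthogonality of the Hermite system rather than through termwise integration.
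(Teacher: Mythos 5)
Your proposal is correct and follows essentially the same route as the paper's proof: the Hermite expansion identity \(\mathbb{E}e^{tY}=e^{t^2/2}\sum_{n\geqslant 0}\mathbb{E}H_n(Y)\,t^n/n!\), Cauchy--Schwarz with the \(J\)-indexed terms removed, and the rearrangement of the hypothesis with \(x=t^2\). The only difference is that you justify that identity via the polarized Parseval identity in \(L^2(\varphi)\) (pairing \(p/\varphi\) with \(g_t(x)=e^{tx-t^2/2}\)), whereas the paper obtains it by taking expectations termwise in the generating function; your treatment is a valid, and somewhat more careful, handling of that step.
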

\begin{proof}Substituting \( x = Y \) into the generating function of Hermite polynomials,  
\[
e^{xt - t^2/2} = \sum_{n=0}^{\infty} \frac{H_n(x)}{n!} t^n,
\]
and taking the expectation on both sides, we obtain the identity  
\begin{equation}
\label{identity_for_H}
\mathbb{E} e^{tY} = e^{t^2/2} \sum_{n=0}^{\infty} \frac{\mathbb{E} H_n(Y)}{n!} t^n.	
\end{equation}Applying Cauchy inequality to evaluate the right side of this identity  and keeping in mind that \(\mathbb{E}H_n(Y)=0\) for all \(n\in J\) we get
\[
\begin{split}
\mathbb{E}e^{tY}&=e^{t^2/2}\sum ^{\infty }_{n=0}\frac{\mathbb{E}H_n(Y)}{n!}t^n	
\\
&\leqslant e^{t^2/2}\left(1+\sqrt{\sum ^{\infty }_{n=1}\frac{\bigl|\mathbb{E}H_n(Y)\bigr|^2}{n!}}\sqrt{\sum_{\substack{n\geqslant 1\\n\not\in J}} \frac{t^{2n}}{n!}}\right)
\\
&=e^{t^2/2}\left(1+(e^{t^2/2}-1)\sqrt{\chi^2(Y,\mathcal{N})\frac{e^{t^2}-1-\sum_{\substack{n\in J}} \frac{t^{2n}}{n!}}{(e^{t^2/2}-1)^2}}\right).
\end{split}
\]
If the condition of the theorem is satisfied, then
\[
\chi^2(Y,\mathcal{N})<\frac{(e^{t^2/2}-1)^2}{e^{t^2}-1-\sum_{\substack{n\in J}} \frac{t^{2n}}{n!}}
\]
for all \(t\not=0\), 
which leads to inequality
\[
\begin{split}
\mathbb{E}e^{tY}
&<e^{t^2/2}\left(1+(e^{t^2/2}-1)\right)
=e^{t^2}
\end{split}
\] 
for all \(t\not=0\). The theorem is proved.
\end{proof}
\begin{cor}
\label{thm_subgausian_intro}
Below are  three  sufficient conditions under which the random variable \(Y\)  satisfies the subgaussian  condition
\(
\mathbb{E}e^{tY}<e^{t^2}
\)
for all \(t\not=0 \).
\begin{enumerate}
\item
If \(\mathbb{E}Y=0\) and
\[
\chi^2(Y,\mathcal{N})<\inf_{x> 0}\frac{(e^{x/2}-1)^2}{e^{x}-1-x}=\frac{1}{2}
\]
	\item  If \(\mathbb{E}Y=0\), \(\mathbb{E}Y^2=1\) and
\[
\chi^2(Y,\mathcal{N})<\min_{x> 0}\frac{(e^{x/2}-1)^2}{e^{x}-1-x-\frac{x^2}{2!}}=0.96116\ldots
\]	
\item
If  \(Y\) is a symmetric random variable with zero mean and unit variance, and
\[
\chi^2(Y,\mathcal{N})<\min_{x> 0}\frac{(e^{x/2}-1)^2}{\frac{e^{x}+e^{-x}}{2}-1-\frac{x^2}{2!}}=1.97044\ldots
\]
\end{enumerate}
\end{cor}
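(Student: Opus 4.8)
The plan is to deduce all three statements from Theorem~\ref{thm_subgausian_general} by choosing, in each case, an appropriate set \(J\) of indices for which \(\mathbb{E}H_n(Y)=0\), and then evaluating the resulting infimum in closed form or numerically. The first step is the identification of \(J\). In case (1) the hypothesis \(\mathbb{E}Y=0\) gives \(\mathbb{E}H_1(Y)=0\), so I would take \(J=\{1\}\). In case (2) the additional hypothesis \(\mathbb{E}Y^2=1\) yields \(\mathbb{E}H_2(Y)=\mathbb{E}Y^2-1=0\), so \(J=\{1,2\}\). In case (3) symmetry forces every odd moment of \(Y\) to vanish, hence \(\mathbb{E}H_{2m+1}(Y)=0\) for all \(m\geqslant 0\); combined with \(\mathbb{E}H_2(Y)=0\) this gives \(J=J_{sym}=\{2\}\cup\{2m+1\mid m\geqslant 0\}\), exactly the set used earlier.

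The second step is to put the denominator \(e^{x}-1-\sum_{n\in J}\frac{x^{n}}{n!}\) of Theorem~\ref{thm_subgausian_general} into closed form for each \(J\). For \(J=\{1\}\) the series is simply \(x\), giving \(e^{x}-1-x\). For \(J=\{1,2\}\) it is \(x+\frac{x^{2}}{2}\), giving \(e^{x}-1-x-\frac{x^{2}}{2}\). For \(J=J_{sym}\) I would use \(\sum_{m\geqslant 0}\frac{x^{2m+1}}{(2m+1)!}=\sinh x\) to obtain \(\sum_{n\in J_{sym}}\frac{x^{n}}{n!}=\sinh x+\frac{x^{2}}{2}\), so that the denominator becomes \(e^{x}-\sinh x-1-\frac{x^{2}}{2}=\cosh x-1-\frac{x^{2}}{2}\). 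These three expressions are precisely the ones appearing in the statement.

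The third step is to evaluate the infimum over \(x>0\). In case (1) I claim the value is \(\tfrac12\), attained only in the limit \(x\to0^{+}\). I would verify this by the elementary inequality \((e^{x/2}-1)^{2}\geqslant\tfrac12(e^{x}-1-x)\) for all \(x>0\): writing \(f(x)=\tfrac12 e^{x}-2e^{x/2}+\tfrac32+\tfrac{x}{2}\), one checks \(f(0)=f'(0)=0\) and \(f''(x)=\tfrac12 e^{x/2}(e^{x/2}-1)\geqslant0\), so \(f\geqslant0\), while a Taylor expansion shows the ratio tends to \(\tfrac12\) as \(x\to0\). In cases (2) and (3) the ratio tends to \(+\infty\) as \(x\to0^{+}\) (the denominator vanishes to order \(x^{3}\) against a numerator of order \(x^{2}\)) and tends to \(1\) as \(x\to\infty\); hence the infimum is attained at an interior critical point, which I would locate by setting the derivative of the ratio to zero and solving the resulting transcendental equation numerically, giving \(0.96116\ldots\) and \(1.97044\ldots\).

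The main obstacle is the rigorous evaluation of the minima in cases (2) and (3): these admit no closed form, so the argument reduces to a numerical root-finding for the critical-point equation together with a verification that the located critical point is the global minimum on \((0,\infty)\). The limiting behavior at \(0\) and \(\infty\) computed above rules out the infimum being approached at the boundary, which is what makes the interior minimum the governing quantity. By contrast, case (1) has a clean closed-form answer and is settled by the convexity argument above.
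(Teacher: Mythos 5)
Your proposal takes exactly the paper's route: the paper's entire proof is the observation that Theorem~\ref{thm_subgausian_general} applied with \(J=\{1\}\), \(J=\{1,2\}\) and \(J=J_{sym}\) yields the three cases, and your identification of these sets and of the closed-form denominators (in particular \(e^{x}-\sinh x-1-\tfrac{x^2}{2}=\cosh x-1-\tfrac{x^2}{2}\) for the symmetric case) is correct; your convexity argument for the value \(\tfrac12\) in case (1) is a genuine addition, since the paper simply asserts the numerical values of the infima. One slip in your verification of case (3): as \(x\to\infty\) the ratio tends to \(2\), not \(1\), because the numerator is \(\sim e^{x}\) while \(\cosh x-1-\tfrac{x^2}{2}\sim \tfrac12 e^{x}\); as written, a limit of \(1\) at infinity would contradict the claimed minimum \(1.97044\ldots>1\). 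With that correction your boundary analysis (ratio \(\to+\infty\) at \(0\), \(\to 2\) at \(\infty\)) correctly forces the minimum to be attained at an interior critical point, and the rest of the argument stands.
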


\begin{proof}
The proof follows immediately from Theorem \ref{thm_subgausian_general} by considering special cases \(J=\{1\}\), \(J=\{1,2\}\) and  \(J=J_{sym}\) .
\end{proof}
The upper bound constants in the above theorem are unlikely to be optimal. Proposition 18.1 in \cite{bobkov_et_al_2019} presents an example of a distribution \(Y\) with zero mean and unit variance for which \( \chi^2(Y, \mathcal{N}) \) is finite, yet the subgaussian condition~\eqref{sub-gausian_condition} fails to hold.

\appendix

\section{Appendix: Upper bounds for \(C_J(p)\)}
\label{appA}

Instead of dealing with \(h_J(s,p)\) we will consider a simpler upper bound estimate \(h^0_J(s,p)\) for this function which is more convenient to deal with, without sacrificing too much in terms of precision. 
\begin{lem}
\label{lem_h0_vs_h} The inequality
	\[
h_J(s,p)\leqslant h^0_J(s,p)
\]
with
\[
\begin{split}
h_J^0(s,p)=\frac{(1-p)^s}{p^2}\left(\frac{p^2}{(1-p)^{s+1}}+\frac{p}{s}\frac{1}{(1-p)^{s}}-\sum_{k\in  J}k\frac{(k+s-2)!}{(k-1)!s!}p^k\right)
\end{split}
\]
holds for any set \(J\subset \mathbb{Z}^+\) and all \(s\geqslant 1\) and \(0<p<1\). 
\end{lem}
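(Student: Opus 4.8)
The plan is to replace the awkward factor $k^2/(k+s)^2$ inside the series defining $h_J(s,p)$ by the slightly larger but far more tractable factor $k^2/\bigl((k+s)(k+s-1)\bigr)$, and then to evaluate the resulting series in closed form. First I would observe that since $s\geqslant 1$ and $k\geqslant 1$ we have $k+s\geqslant 2$, so $(k+s)(k+s-1)\leqslant (k+s)^2$ and hence, termwise,
\[
\frac{k^2}{(k+s)^2}\binom{k+s}{k}\leqslant \frac{k^2}{(k+s)(k+s-1)}\binom{k+s}{k}.
\]
The reason for this replacement is that the right-hand side collapses: writing $\binom{k+s}{k}=\frac{(k+s)!}{k!\,s!}$ and cancelling the two leading factors of $(k+s)!$ against the denominator gives $\frac{k^2(k+s-2)!}{k!\,s!}=k\frac{(k+s-2)!}{(k-1)!\,s!}$, which is precisely the summand appearing in the definition of $h^0_J$. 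I would denote this quantity $T(k,s):=k\frac{(k+s-2)!}{(k-1)!\,s!}$, so the displayed bound reads $\frac{k^2}{(k+s)^2}\binom{k+s}{k}\leqslant T(k,s)$.

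Next I would split the sum over $\mathbb{Z}^+\setminus J$ as the full sum over $\mathbb{Z}^+$ minus the sum over $J$, obtaining
\[
\sum_{k\in\mathbb{Z}^+\setminus J}\frac{k^2}{(k+s)^2}\binom{k+s}{k}p^k\leqslant \sum_{k=1}^\infty T(k,s)p^k-\sum_{k\in J}T(k,s)p^k.
\]
It then remains only to evaluate $\sum_{k\geqslant 1}T(k,s)p^k$ in closed form. Substituting $j=k-1$ and using $\frac{(j+s-1)!}{j!\,s!}=\frac1s\binom{j+s-1}{j}$ turns this sum into $\frac{p}{s}\sum_{j\geqslant 0}(j+1)\binom{j+s-1}{j}p^j$. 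The two pieces are the negative-binomial series and its derivative: $\sum_{j\geqslant 0}\binom{j+s-1}{j}p^j=(1-p)^{-s}$ and $\sum_{j\geqslant 0}j\binom{j+s-1}{j}p^j=p\frac{d}{dp}(1-p)^{-s}=sp(1-p)^{-s-1}$. Combining them gives
\[
\sum_{k=1}^\infty T(k,s)p^k=\frac{p^2}{(1-p)^{s+1}}+\frac{p}{s(1-p)^{s}}.
\]
Multiplying the preceding inequality through by $\frac{(1-p)^s}{p^2}$ then reproduces exactly the claimed formula for $h^0_J(s,p)$, since $-\sum_{k\in J}T(k,s)p^k$ is precisely the subtracted sum in its definition.

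I do not expect a serious obstacle: the whole argument is a single termwise inequality followed by a routine generating-function computation. The only point requiring care is the very first bound, where one must verify that $k+s-1\geqslant 1$ — guaranteed by the hypothesis $s\geqslant 1$ together with $k\geqslant 1$ — so that the factorial $(k+s-2)!$ is well defined and the cancellation is legitimate; this is also precisely what keeps the term $\frac{p}{s(1-p)^s}$ finite. Everything else is bookkeeping with the binomial coefficients.
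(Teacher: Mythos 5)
Your proposal is correct and follows essentially the same route as the paper's proof: the termwise bound $\frac{k^2}{(k+s)^2}\binom{k+s}{k}\leqslant k\frac{(k+s-2)!}{(k-1)!\,s!}$ (which the paper phrases as dropping the factor $\frac{k+s-1}{k+s}<1$), the split of the sum over $\mathbb{Z}^+\setminus J$ into the full series minus the series over $J$, and the closed-form evaluation of the full series via the negative-binomial generating function. The only difference is cosmetic bookkeeping in how the series is summed (shift $j=k-1$ and differentiate, versus the paper's direct use of $\sum_{k\geqslant 0}\frac{(k+s)!}{k!\,s!}p^k=(1-p)^{-s-1}$).
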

\begin{proof}
	We express the binomial coefficient \(\binom{k+s}{k}\) in the definition (\ref{def_h_J}) of \( h_J(s,p) \) as a ratio of factorials
\(
\binom{k+s}{k} = \frac{(k+s)!}{k!s!},
\)  
 since \(\frac{k+s-1}{k+s} < 1\), it follows that
\begin{equation}
\label{h_j_ineq}
\begin{split}
	h_J(s,p)
	&\leqslant\frac{(1-p)^s}{p^2}\sum_{k\in \mathbb{Z}^+\setminus J}k\frac{(k+s-2)!}{(k-1)!s!}p^k
	\\
	&=\frac{(1-p)^s}{p^2}\left(\sum_{k=0}^\infty k\frac{(k+s-2)!}{(k-1)!s!}p^k-\sum_{k\in  J}k\frac{(k+s-2)!}{(k-1)!s!}p^k\right).
\end{split}
\end{equation}
Making use of identity
	\(
\sum_{k=0}^\infty\frac{(k+s)!}{k!s!}p^k=\frac{1}{(1-p)^{s+1}}
\)
we can compute the series 
\[
\begin{split}
\sum ^{\infty}_{k=1}k\frac{(k+s-2)!}{(k-1)!s!}p^k
&=\frac{p^2}{(1-p)^{s+1}}+\frac{p}{s}\frac{1}{(1-p)^{s}}
\end{split}.
\]
Inserting this expression into the right side of the formula (\ref{h_j_ineq}) we complete the proof of the lemma.
\end{proof} 
As a special case when \(J=\{1,2\}\) we get
\begin{equation}
\label{h_general_formula}
	\begin{split}
h^0_{\{1,2\}}(s,p)
&=\frac{1}{1-p}-2(1-p)^s+\frac{1-(1-p)^s}{ps }.	
\end{split}
\end{equation}
For the set \(J_{sym}=\{2\}\cup \{2m+1|m\geqslant 0\}\) note that the Taylor expansion of \( \frac{(1-p)^s}{p^2} h^0_{J_{sym}}(s,p) \) can be obtained from the Taylor expansion of \( \frac{(1-p)^s}{p^2} h^0_{J_{\{1,2\}}}(s,p) \) by removing all terms corresponding to odd powers of \( p \). Hence
\[
\frac{p^2}{(1-p)^s}h^0_{J_{sym}}(s,p)=\frac{1}{2}\left( \frac{p^2}{(1-p)^s}h_{\{1,2\}}(s,p)+\frac{p^2}{(1+p)^s}h_{\{1,2\}}(s,-p)\right)
\]
hence
\begin{equation}
\label{sym_h_formula}
	h^0_{J_{sym}}(s,p)=\frac{1}{2}\left( h^0_{\{1,2\}}(s,p)+\frac{(1-p)^s}{(1+p)^s}h^0_{\{1,2\}}(s,-p)\right).
\end{equation}
We will need several elementary estimates collected in the following lemma.
\begin{lem}
\label{inequality_elementary} Inequalities
\begin{equation}
\label{ineq_first}
\begin{split}
	0<e^{sp}-\left( 1+p\right) ^{s}<\frac{sp^2}{2}e^{sp},
\end{split}	
\end{equation}
\begin{equation}
\label{ineq_second}
\begin{split}
	0<e^{-sp}-\left( 1-p\right) ^{s}<\frac{sp^2}{2(1-p)}e^{-sp}
\end{split}	
\end{equation}
and 
\begin{equation}
\label{ineq_third}
\begin{split}
0<e^{-2sp}-\frac{(1-p)^s}{(1+p)^s}&< \frac{2}{3}\frac{sp^3}{(1-p^2)^2}e^{-2sp}
\end{split}	
\end{equation}
hold for all \(0<p<1\) and \(s\geqslant 1\).
\end{lem}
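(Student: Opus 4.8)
The plan is to handle all three inequalities by one common device: rewrite each subtracted power as a single exponential, factor out the dominant exponential, and then reduce the problem to bounding a logarithmic remainder by means of the elementary inequality $1-e^{-x}<x$, valid for every $x>0$. The only genuine computation, carried out separately in each case, is the estimate of this remainder.

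For the lower bounds I would write $(1+p)^s=e^{s\ln(1+p)}$, $(1-p)^s=e^{s\ln(1-p)}$ and $\frac{(1-p)^s}{(1+p)^s}=e^{s\ln\frac{1-p}{1+p}}$, and invoke the three inequalities $\ln(1+p)<p$, $\ln(1-p)<-p$ and $\ln\frac{1-p}{1+p}<-2p$. The first is the classical bound $\ln(1+x)\leqslant x$; the other two follow at once from the expansions $\ln(1-p)=-p-\frac{p^2}{2}-\cdots$ and $\ln\frac{1-p}{1+p}=-2p-\frac{2p^3}{3}-\cdots$, in which every term after the linear one is negative. Multiplying by $s>0$ and exponentiating yields $(1+p)^s<e^{sp}$, $(1-p)^s<e^{-sp}$ and $\frac{(1-p)^s}{(1+p)^s}<e^{-2sp}$, which are precisely the three lower bounds.

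For the upper bounds I would factor out the leading exponential, writing
\[
e^{sp}-(1+p)^s=e^{sp}\bigl(1-e^{-s(p-\ln(1+p))}\bigr),\qquad e^{-sp}-(1-p)^s=e^{-sp}\bigl(1-e^{s(p+\ln(1-p))}\bigr),
\]
together with $e^{-2sp}-\frac{(1-p)^s}{(1+p)^s}=e^{-2sp}\bigl(1-e^{s(2p+\ln\frac{1-p}{1+p})}\bigr)$. In each bracket the exponent equals $-s$ times a strictly positive quantity, so applying $1-e^{-x}<x$ reduces matters to the three remainder estimates $p-\ln(1+p)<\frac{p^2}{2}$, $\;-(p+\ln(1-p))<\frac{p^2}{2(1-p)}$ and $\;-\bigl(2p+\ln\frac{1-p}{1+p}\bigr)<\frac{2}{3}\frac{p^3}{(1-p^2)^2}$. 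The first holds because $p-\ln(1+p)=\frac{p^2}{2}-\frac{p^3}{3}+\cdots$ is an alternating series whose terms decrease in magnitude for $0<p<1$, hence is dominated by its first term. The second follows from $-(p+\ln(1-p))=\sum_{k\geqslant 2}\frac{p^k}{k}\leqslant\frac12\sum_{k\geqslant 2}p^k=\frac{p^2}{2(1-p)}$, using $\frac1k\leqslant\frac12$ for $k\geqslant 2$. The third follows from $-\bigl(2p+\ln\frac{1-p}{1+p}\bigr)=2\sum_{k\geqslant 1}\frac{p^{2k+1}}{2k+1}\leqslant\frac23\sum_{k\geqslant 1}p^{2k+1}=\frac23\frac{p^3}{1-p^2}$, using $\frac{1}{2k+1}\leqslant\frac13$ for $k\geqslant 1$, and then relaxing $\frac{1}{1-p^2}$ to $\frac{1}{(1-p^2)^2}$ since $0<1-p^2<1$.

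I do not anticipate a serious obstacle here: once the exponential factorization is in place the scheme is entirely elementary, and all three cases share the same skeleton. The most delicate point is merely bookkeeping in the third case — reading off the odd-power series of $\ln\frac{1-p}{1+p}$ so that the leading behaviour is cubic rather than quadratic — and noticing that the stated denominator $(1-p^2)^2$ is in fact slightly weaker than what the argument produces, so the claimed bound holds with room to spare.
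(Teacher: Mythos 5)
Your proof is correct, but it follows a genuinely different route from the paper's. The paper proves all three upper bounds by an integral representation: writing, e.g., $e^{sp}-(1+p)^s = se^{sp}\int_0^p\bigl((1+x)e^{-x}\bigr)^{s-1}xe^{-x}\,dx$ (and analogously with $(1-x)e^{x}$ and $e^{2x}\tfrac{1-x}{1+x}$), then using that the bracketed function lies in $(0,1]$ to drop the power $s-1$ and bounding the remaining explicit integral; the lower bounds come from the monotonicity/sign of the same auxiliary functions. You instead exponentiate logarithms and reduce everything to the single inequality $1-e^{-x}<x$ plus term-by-term comparison of the series for $p-\ln(1+p)$, $-(p+\ln(1-p))$ and $-(2p+\ln\tfrac{1-p}{1+p})$. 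The two methods are equally elementary, but yours has two small advantages: it is uniform across the three cases (one device, three remainder estimates), and it actually yields slightly sharper constants in the second and third inequalities — you get $\tfrac{2}{3}\tfrac{sp^3}{1-p^2}e^{-2sp}$ before voluntarily weakening to the stated $\tfrac{2}{3}\tfrac{sp^3}{(1-p^2)^2}e^{-2sp}$, whereas the paper's bound $\int_0^p\frac{x^2}{(1-x^2)^2}\,dx\leqslant\frac{p^3}{3(1-p^2)^2}$ produces the squared denominator intrinsically. Moreover, your argument never uses $s\geqslant 1$ (only $s>0$), while the paper's needs $s\geqslant 1$ to discard the factor raised to the power $s-1$; this extra generality is harmless since the lemma is only applied with $s\in\mathbb{Z}^+$.
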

\begin{proof}

The lower bound of the first inequality follows from the well-known elementary inequality \( 1 + p < e^p \). For the upper bound, note that this inequality also implies \( (1 + p)e^{-p} < 1 \). Therefore, 
\[
\begin{split}
	e^{sp}-\left( 1+p\right) ^{s}&=se^{sp}\int_0^p(\left( 1+x\right) e^{-x})^{s-1 }xe^{-x}\,dx
	<se^{sp}\int_0^pxe^{-x}\,dx<\frac{sp^2}{2}e^{sp}.
\end{split}
\]
In a similar way 
\[
\begin{split}
	e^{-sp}-\left( 1-p\right) ^{s}
	&=se^{-sp}\int_0^p(\left( 1-x\right) e^{x})^{s-1 }xe^{x}\,dx
	<\frac{sp^2}{2}e^{-sp}e^p.
\end{split}
\]
Evaluating  \(e^p<1/(1-p )\) in the last estimate immediately leads to the upper bound of the second inequality.
Note that 
\[
\frac{d}{dx}\left(e^{2x}\frac{1-x}{1+x}\right)=-\frac{2 e^{2 x} x^2}{(x+1)^2}< 0,
\]
when \(x\not=0\) from which is the lower bound of the third inequality follows. For the upper bound
\[
\begin{split}
e^{-2sp}-\frac{(1-p)^s}{(1+p)^s}&=e^{-2sp}s\int_0^p\left(e^{2x}\frac{1-x}{1+x}\right)^{s-1}\frac{2 e^{2 x} x^2}{(x+1)^2}\,dx
\\
&<e^{-2sp}s\int_0^p\frac{2 e^{2 x} x^2}{(x+1)^2}\,dx
\\
&<
2e^{-2sp}s\int_0^p\frac{ x^2}{(1-x^2)^2}\,dx
\\
&<
\frac{2}{3}\frac{sp^3}{(1-p^2)^2}e^{-2sp},
\end{split}
\]
here we used the inequality \(e^{2x}<1/(1-x)^2\) to evaluate \(e^{2x}\) under the integration sign.
\end{proof}

\begin{lem} 
\label{lem_general_h}The function \(h_{\{1,2\}}^0(s,p)\)  satisfies the following inequalities:
\begin{equation}
\label{inequality_for_h_J}
	g(sp)\leqslant h^0_{\{1,2\}}(s,p)\leqslant g(sp)+\left(1+\frac{1}{\sqrt{e}}\right)\frac{p}{1-p}
\end{equation}
and 
\begin{equation}
\label{inequality_for_h_J_sym}
	\begin{split}
h^0_{J_{sym}}(s,p)\leqslant\frac{1}{2}\left( g(sp)+e^{-2sp}g(-sp)\right)	+\frac{1}{2}\left(1+\frac{2}{e^{3/4}}\right)\frac{p}{1-p}+\frac{4 e^{3/2}-1}{6 e^3}\frac{p^2}{(1-p^2)^2}
\end{split}
\end{equation}
for \(0<p<1\) and \(s\in \mathbb{Z}^+\),
where 
\[
\begin{split}
g(x)
&=1-2e^{-x}+\frac{1-e^{-x}}{x}.
\end{split}
\]

\end{lem}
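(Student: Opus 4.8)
The plan is to reduce each inequality to the explicit closed forms already derived and then to a one-variable estimate in the product \(u=sp\).

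For the first inequality \eqref{inequality_for_h_J} I would start from the closed form \eqref{h_general_formula} and subtract \(g(sp)\). The rational and exponential pieces collapse into the clean identity
\[
h^0_{\{1,2\}}(s,p)-g(sp)=\frac{p}{1-p}+\Bigl(2+\tfrac{1}{sp}\Bigr)\bigl(e^{-sp}-(1-p)^s\bigr).
\]
The lower bound is then immediate: by the left half of \eqref{ineq_second} the quantity \(e^{-sp}-(1-p)^s\) is positive, so every summand is nonnegative and \(g(sp)\leqslant h^0_{\{1,2\}}(s,p)\). For the upper bound I would insert the right half of \eqref{ineq_second}, namely \(e^{-sp}-(1-p)^s<\frac{sp^2}{2(1-p)}e^{-sp}\), which converts the second summand into \(\frac{p}{1-p}\,e^{-sp}\bigl(sp+\tfrac12\bigr)\). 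It then remains to maximize \(u\mapsto e^{-u}\bigl(u+\tfrac12\bigr)\) over \(u=sp>0\); its derivative \(e^{-u}\bigl(\tfrac12-u\bigr)\) shows the maximum is attained at \(u=\tfrac12\) with value \(e^{-1/2}=1/\sqrt{e}\), which produces exactly the constant \(1+1/\sqrt{e}\).

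For the symmetric inequality \eqref{inequality_for_h_J_sym} I would begin from the decomposition \eqref{sym_h_formula}, writing \(h^0_{J_{sym}}(s,p)\) as the average of the \(+p\) term \(h^0_{\{1,2\}}(s,p)\) and the \(-p\) term \(\frac{(1-p)^s}{(1+p)^s}h^0_{\{1,2\}}(s,-p)\), and comparing with the target average \(\frac12\bigl(g(sp)+e^{-2sp}g(-sp)\bigr)\). The \(+p\) term is handled by the identity just established. For the \(-p\) term I would add and subtract \(e^{-2sp}h^0_{\{1,2\}}(s,-p)\) to split
\[
\frac{(1-p)^s}{(1+p)^s}h^0_{\{1,2\}}(s,-p)-e^{-2sp}g(-sp)
=e^{-2sp}\bigl(h^0_{\{1,2\}}(s,-p)-g(-sp)\bigr)-\Bigl(e^{-2sp}-\tfrac{(1-p)^s}{(1+p)^s}\Bigr)h^0_{\{1,2\}}(s,-p).
\]
The first bracket is controlled through the \(p\mapsto-p\) analogue of the identity above, which reads \(h^0_{\{1,2\}}(s,-p)-g(-sp)=-\frac{p}{1+p}+\bigl(2-\tfrac{1}{sp}\bigr)\bigl(e^{sp}-(1+p)^s\bigr)\), using \eqref{ineq_first} to bound \(e^{sp}-(1+p)^s\); the second bracket is controlled by \eqref{ineq_third}, with the factor \(\frac{2}{3}\frac{sp^3}{(1-p^2)^2}e^{-2sp}\) multiplied by \(\bigl|h^0_{\{1,2\}}(s,-p)\bigr|\). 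Collecting the pieces, the exact \(\frac{p}{1-p}\) remainders from the two halves combine and, after the corresponding one-variable optimization in \(u=sp\), assemble into the coefficient \(\frac12\bigl(1+2/e^{3/4}\bigr)\) of \(\frac{p}{1-p}\), while the \eqref{ineq_third} contribution yields the coefficient \(\frac{4e^{3/2}-1}{6e^3}\) of \(\frac{p^2}{(1-p^2)^2}\).

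The routine part is the algebra of subtracting closed forms. The delicate part, and the main obstacle, is the symmetric bookkeeping: one must track signs carefully (note that \(g(-sp)<0\) and the \(-p/(1+p)\) remainder is negative, both of which in fact help the upper bound, and that \(h^0_{\{1,2\}}(s,-p)\) changes sign as \(s\) grows), and one must merge the several \(\frac{p}{1-p}\)-type remainders from both halves into a single maximization so as to obtain the sharp \(2/e^{3/4}\) rather than a cruder sum of constants. Because \(s\) ranges over positive integers and \(p\in(0,1)\) independently while every bound depends only on the product \(u=sp\), each step reduces to a genuine one-dimensional calculus problem; the only real care needed is to confirm that the claimed maximizers lie in the admissible range and to verify the elementary maxima that produce the constants \(1/\sqrt{e}\), \(2/e^{3/4}\) and \(\frac{4e^{3/2}-1}{6e^3}\).
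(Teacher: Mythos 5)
Your treatment of the first inequality \eqref{inequality_for_h_J} coincides with the paper's proof: the same identity for \(h^0_{\{1,2\}}(s,p)-g(sp)\), the same use of \eqref{ineq_second} for both bounds, and the same maximization of \(1+e^{-u}\left(u+\tfrac12\right)\) at \(u=\tfrac12\). That part is correct.

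For the symmetric inequality \eqref{inequality_for_h_J_sym} there is a genuine gap, and it comes precisely from your choice of decomposition. You write
\[
\frac{(1-p)^s}{(1+p)^s}h^0_{\{1,2\}}(s,-p)-e^{-2sp}g(-sp)
=e^{-2sp}\bigl(h^0_{\{1,2\}}(s,-p)-g(-sp)\bigr)-\left(e^{-2sp}-\frac{(1-p)^s}{(1+p)^s}\right)h^0_{\{1,2\}}(s,-p),
\]
so that the small prefactor difference \(e^{-2sp}-\frac{(1-p)^s}{(1+p)^s}\) multiplies \(h^0_{\{1,2\}}(s,-p)\). But \(h^0_{\{1,2\}}(s,-p)\) is \emph{not} a function of the product \(u=sp\) alone (unlike \(g(-sp)\)), so your guiding principle that ``every bound depends only on \(u=sp\)'' fails exactly at this term, and the one-variable maximization that is supposed to produce the constant \(\frac{4e^{3/2}-1}{6e^3}\) is simply not available. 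In the relevant case \(h^0_{\{1,2\}}(s,-p)<0\) (which does occur, e.g.\ for small \(sp\)) this term is positive and must be bounded; to do so you have to replace \(h^0_{\{1,2\}}(s,-p)\) by \(g(-sp)\) plus the remainder \(-\frac{p}{1+p}+\left(2-\frac{1}{sp}\right)\bigl(e^{sp}-(1+p)^s\bigr)\), and since the whole factor enters with a minus sign, the remainder now works against you: in all regimes the \(\frac{p}{1+p}\) piece contributes positively, and when \(sp<\tfrac12\) so does \(\left(\frac{1}{sp}-2\right)\bigl(e^{sp}-(1+p)^s\bigr)\). After multiplying by \(\frac{2}{3}\frac{sp^3}{(1-p^2)^2}e^{-2sp}\) from \eqref{ineq_third}, these leave extra positive terms of order \(p^4/(1-p^2)^2\) that are not present on the right-hand side of \eqref{inequality_for_h_J_sym}; absorbing them would require either enlarging the stated constants or a slack analysis of the one-variable maxima that you do not carry out. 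The paper's proof avoids this entirely by distributing the terms the other way,
\[
\frac{(1-p)^s}{(1+p)^s}h^0_{\{1,2\}}(s,-p)-e^{-2sp}g(-sp)
=\frac{(1-p)^s}{(1+p)^s}\bigl(h^0_{\{1,2\}}(s,-p)-g(-sp)\bigr)+\left(\frac{(1-p)^s}{(1+p)^s}-e^{-2sp}\right)g(-sp),
\]
so that the prefactor difference multiplies \(g(-sp)\); then \eqref{ineq_first}--\eqref{ineq_third} reduce everything to functions of \(sp\) times \(\frac{p}{1-p}\) or \(\frac{p^2}{(1-p^2)^2}\), and the two maxima \(\max_{x>0}\bigl(1+e^{-x}(2x+\tfrac12)\bigr)=1+2e^{-3/4}\) and \(\max_{x>0}xe^{-2x}\bigl(-g(-x)\bigr)=\frac{4e^{3/2}-1}{2e^3}\) yield the claimed constants exactly. (Your side remark that \(h^0_{\{1,2\}}(s,-p)\) ``changes sign as \(s\) grows'' is also unsubstantiated, but the sign is not the real issue; the loss of the one-variable structure is.)
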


\begin{proof} The expression (\ref{h_general_formula}) for \(h^0_{\{1,2\}}(s,p)\) leads to the identity
\[
	\begin{split}
			h^0_{\{1,2\}}(s,p)-g(sp)
			&= \frac{p}{1-p}+\bigl(e^{-sp}-(1-p)^s\bigr)\left(2+\frac{1}{sp}\right).
	\end{split}
	\]
Since the right-hand side of this identity is always positive by inequality (\ref{ineq_second}) of Lemma \ref{inequality_elementary}, the lower bound of inequality (\ref{inequality_for_h_J}) follows immediately. For the upper bound,  application of the inequality (\ref{ineq_second}) of  Lemma \ref{inequality_elementary} leads to the estimate
\begin{equation}
\label{ineq_J}
\begin{split}
			h^0_{\{1,2\}}(s,p)-g(sp)
			&\leqslant \frac{p}{1-p}\left(1+e^{-sp}\left(sp+\frac{1}{2}\right)\right).
	\end{split}	
\end{equation}
Since the function \(1+e^{-x}\left(x+\frac{1}{2}\right)\) has a unique maximum for \(x>0\) equal to \(1+\frac{1}{\sqrt{e}}\), the upper bound of the theorem follows.

Formula (\ref{sym_h_formula}) allows us to express \( h^0_{J_{sym}}(s,p) \) in terms of the functions \( h^0_{\{1,2\}}(s,p) \) and \( h^0_{\{1,2\}}(s,-p) \), while inequality (\ref{inequality_for_h_J}) suggests that \( h^0_{\{1,2\}}(s,p) \) and \( h^0_{\{1,2\}}(s,-p) \) can be approximated by \( g(sp) \) and \( g(-sp) \), respectively. With this in mind, we can write:
\[
\begin{split}
h^0_{J_{sym}}(s,p)
&=\frac{1}{2}\left( g(sp)+e^{-2sp}g(-sp)\right)+R_1+R_2+R_3
\end{split}
\]
where
\[
\begin{split}
	R_1&=\frac{1}{2}\left( h^0_{\{1,2\}}(s,p)-g(sp)\right)
	\\
	R_2&=\frac{1}{2}\left( \frac{(1-p)^s}{(1+p)^s}\bigl(h^0_{\{1,2\}}(s,-p)-g(-sp)\bigr)\right)
	\\
	R_3&=\frac{1}{2}\left( \frac{(1-p)^s}{(1+p)^s}-e^{-2sp}\right)g(-sp)
\end{split}
\]
Note that the quantity \( R_2 \) can be expressed by substituting the expressions for \( g(-sp) \) and \( h^0_{\{1,2\}}(s,-p) \) into it as
\[
R_2 = \frac{1}{2}\dfrac{\left( 1-p\right) ^{s}}{\left( 1+p\right) ^{s}}
\left( \dfrac{-p}{1+p}+\left( e^{sp}-\left( 1+p\right) ^{s}\right) \left( 2-\dfrac{1}{sp}\right) \right). 
\]
Dropping here the negative terms \(\dfrac{-p}{1+p}\) and \(\left( e^{sp}-\left( 1+p\right) ^{s}\right) \left(-\dfrac{1}{sp}\right)\) and making use of  inequalities (\ref{ineq_first}) and (\ref{ineq_third}) of the Lemma \ref{inequality_elementary} we get
\[
\begin{split}
R_2 &\leqslant \dfrac{\left( 1-p\right) ^{s}}{\left( 1+p\right) ^{s}}
 \left( e^{sp}-\left( 1+p\right) ^{s}\right)  
\leqslant e^{-2sp}
\frac{sp^2}{2}e^{sp} 
\leqslant \frac{sp^2}{2}e^{-sp}<\frac{sp^2e^{-sp}}{2(1-p)}.
\end{split}
\]
Note that  \(R_1\) can be readily evaluated using the inequality (\ref{ineq_J}), applying which together with the above estimate of \(R_2\)  we obtain the upper bound for the sum of these two quantities
\[
R_1+R_2\leqslant\frac{1}{2}\frac{p}{1-p}\left(1+e^{-sp}\left(2sp+\frac{1}{2}\right)\right)\leqslant \frac{1}{2}\left(1+\frac{2}{e^{3/4}}\right)\frac{p}{1-p},
\]
since \(\max_{x>0}\left(1+e^{-x}\left(2x+\frac{1}{2}\right)\right)=1+\frac{2}{e^{3/4}}.
\)

Since both the quantity \( g(-x) \) and its multiplier in the expression for \( R_3 \) are negative when \( x > 0 \), it follows that \( R_3 \) is positive. Applying inequality (\ref{ineq_third}) from Lemma (\ref{inequality_elementary}), we obtain:
\[
R_3<-\frac12e^{-2sp}g(-sp)\frac{2}{3}\frac{sp^3}{(1-p^2)^2}e^{-2sp}\leqslant\frac{4 e^{3/2}-1}{6 e^3}\frac{p^2}{(1-p^2)^2}
\]
since
\[
\max_{x>0}e^{-2x}(-g(x))x=\frac{4 e^{3/2}-1}{2 e^3}.
\]
By combining the obtained estimates for \( R_1, R_2, \) and \( R_3 \), we derive inequality (\ref{inequality_for_h_J_sym}) of the lemma.
\end{proof}

	\begin{figure}[ht]
    \centering
    \begin{tikzpicture}
    \begin{axis}[
        width=12cm, height=8cm,
        domain=0:20,
        samples=200,
        xmin=0, xmax=20,
        ymin=0, ymax=1.3,
        axis x line=bottom,
        axis y line=left,
        xlabel={$x$},
        ylabel={},
        legend pos=north east,
        legend style={draw=none},
        every axis plot/.append style={thick}
    ]
        \addplot [blue] {1 - exp(-x) + (1 - exp(-x)*(1 + x))/x} 
            node [pos=0.73, anchor=north west, blue] {$g(x)$};
        \addplot [red] {(1 - exp(-x) + (1 - exp(-x)*(1 + x))/x + exp(-2*x)*(1 - exp(x) + (1 - exp(x)*(1 - x))/(-x)))/2}
            node [pos=0.6, anchor=north west, red] {$\dfrac{g(x) + e^{-2x} g(-x)}{2}$};
    \end{axis}
\end{tikzpicture}
    \caption{Graph of $g(x)$ and $\bigr(g(x)+e^{-2x}g(-x)\bigr)/2$}
    \label{fig:example}
\end{figure}
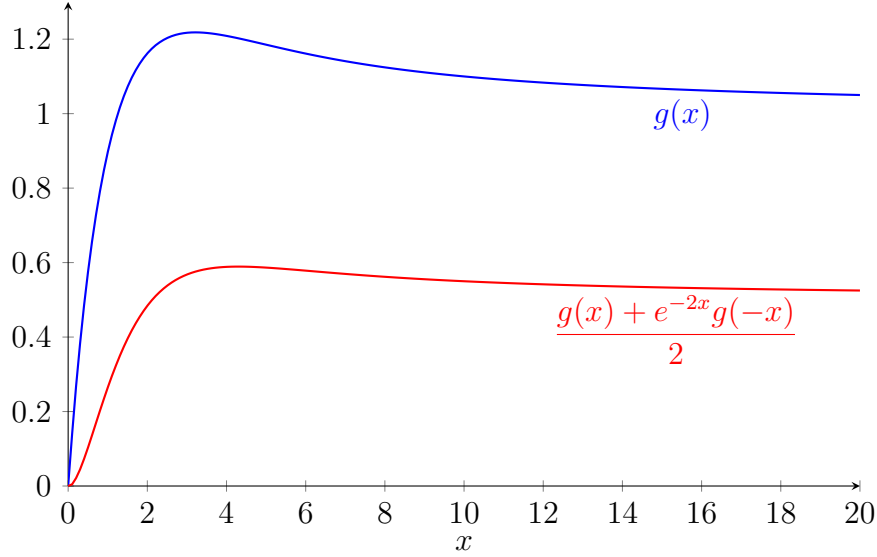
\begin{proof}[Proof of Lemma \ref{lem_C_estimates}] The proof follows immediately from the fact that \( h_J(s,p) \leqslant h^0_J(s,p) \), as established in Lemma \ref{lem_h0_vs_h}. 
This implies that \( C_J(p) \), defined as the maximum of \( h_J(s,p) \) over \( s \in [1,+\infty) \), cannot exceed the corresponding 
maximum of \( h^0_J(s,p) \). The inequalities in the lemma then follow directly from the upper bounds for \( h^0_J(s, p) \) 
provided by Lemma \ref{lem_general_h}, while also taking into account that \( g(x) \) and 
\( \bigl(g(x) + e^{-2x}g(-x)\bigr)/2 \) attain their unique maximum values of \( 1.21824\ldots \) and \( 0.58921\ldots \), 
respectively, on the interval \( (0, +\infty) \) (see Figure \ref{fig:example}). Furthermore, replacing the irrational constants involved 
 with their numerical upper bound estimates we complete the proof of both inequalities of the lemma.

For the computation of the items in the Table \ref{tab:rounded_estimates}
we used the exact expression for \(h_J(s,p)\).  One can easily show that function \(h(s,p)\) can be expressed as: 
\[
h_J(s,p)=\frac{(1-p)^s}{p^2}\left(\frac{p}{(1-p)^{s+1}} -\dfrac{s}{p^s} \int ^{p}_{0} \frac{t^s}{(1-t)^{s+1}}-\sum_{\substack{k\in J}}\frac{k^2}{(k+s)^2}\binom{k+s}{k}p^k\right).
\]
The integral term in this expression can be evaluated explicitly as:
\[
\int ^{p}_{0}\dfrac{t^{s}}{\left( 1-t\right) ^{s+1}}dt = \sum ^{s-1}_{j=0}\frac{\left( -1\right) ^{j+s+1}}{j+1}\left(\frac{p}{1-p}\right)^{j+1}+ (-1)^{s+1} \log(1-p)
\] 
when \(s\) is a positive integer.
This provides  an expression for \(h(s,p)\) in terms of finite sums and elementary functions.
The first row of the Table \ref{tab:rounded_estimates} was computed applying the resulting expression with \(J=\{1,2\}\). For the second row we expressed  \(h_{J_{sym}}(s,p)\) in terms of \(h_{J}(s,p)\) using the relation
\[
	h_{J_{sym}}(s,p)=\frac{1}{2}\left( h_{\{1,2\}}(s,p)+\frac{(1-p)^s}{(1+p)^s}h_{\{1,2\}}(s,-p)\right).
\]
\end{proof}

 \section*{Acknowledgments}
The author wrote this paper while serving as a visiting associate professor at the Institute of Statistical Sciences, Academia Sinica (Taiwan). He sincerely thanks Prof. Hsien-Kuei Hwang for his generous hospitality during the visit.

\end{document}